\newtheorem{Proposition}{Proposition}[section]
\newtheorem{Lemme}{Lemma}[section]
\newtheorem{Theoreme}{Theorem}[section]
\newtheorem{Corollaire}{Corollary}[section]
\newtheorem{Remarque}{Remark}
\def \vu{\vec{u}}
\def \P{\mathbb{P}}
\def \R{\mathbb{R}}
\def \Rt{\mathbb{R}^3}
\def \ds{\displaystyle}
\newcommand{\norm}[1]{\left \Vert #1 \right \Vert}
\title[\bf Fractional approach  to quadratic nonlinear parabolic systems] %
{On the  fractional approach  to quadratic nonlinear parabolic systems} 
\author[  Oscar Jarr\'in and Geremy Loacham\'in]{}
\date{\today}
\subjclass[2020]{Primary: 35B30, 35B40; Secondary: 35D35, 35K58}
\keywords{Fractional Laplacian operator, fractional approximation in the strong topology,  mild solutions, Navier-Stokes equations and related models, Keller-Segel system}  
\email{oscar.jarrin@udla.edu.ec}
\email{geremy.loachamin@uni.lu} 
\thanks{$^*$Corresponding author:  Oscar Jarr\'in}
\begin{document}
	\maketitle
	
	% Enter the first author's name and address:
 
	\centerline{\scshape Oscar Jarr\'in$^*$}
	\medskip
	{\footnotesize
		% Enter the address of the first author
		\centerline{Escuela de Ciencias Físicas y Matemáticas}
		\centerline{Universidad de Las Américas}
		\centerline{V\'ia a Nay\'on, C.P.170124, Quito, Ecuador}
	} 

\medskip 

	\centerline{\scshape Geremy Loacham\'in}
	{\footnotesize
	\centerline{Faculty of Science}
	\centerline{University of Luxembourg} 
	\centerline{Maison du Nombre, 6 Avenue de la Fonte, Esch-sur-Alzette L-4364, Luxembourg} 
}
	
\bigskip
%%%%%%%%%%%%%%%%%%%%%%%%%%%%%%%%%%%%%%%%%%%%%%
\begin{abstract} We introduce a general coupled system of parabolic equations with quadratic nonlinear terms and diffusion terms defined by fractional powers of the Laplacian operator. We develop a method to establish the rigorous convergence of the fractional diffusion case to the classical diffusion case in the strong topology of Sobolev spaces, with explicit convergence rates that reveal some unexpected phenomena. 

These results apply to several relevant real-world models included in the general system, such as the Navier-Stokes equations, the Magneto-hydrodynamics equations, the Boussinesq system, and the Keller-Segel system. For these specific models, this fractional approach is further motivated by previous numerical and experimental studies.  
\end{abstract}
%%%%%%%%%%%%%%%%%%%%%%%%%%%%%%%%%%%%%%%%%%%%%%
%%%%%%%%%%%%%%%%%%%%%%%%%%%%%%%%%%%%%%%%%%%%%%
%{\footnotesize \tableofcontents}
%%%%%%%%%%%%%%%%%%%%%%%%%%%%%%%%%%%%%%%%%%%%%%
\section{Introduction}
\subsection{Motivation} In mathematical, physical and biological studies, the fractional Laplacian operator $(-\Delta)^{\frac{\alpha}{2}}$ (for a definition, see expression (\ref{Lap-Frac}) below) is successfully used to describe anomalous diffusion models. These models are essentially expressed as a nonlinear parabolic system of equations, with the fractional Laplacian operator appearing in the diffusion term \cite{Funaki,Jourdain,Metzler,Zaslavsky}.
 %The probabilistic interpretation of evolution problems involving the fractional Laplacian operator was rigorously studied in \cite{Jourdain}, where the authors considered a family of nonlinear integro-differential equations with the fractional Laplacian operator and a nonlocal quadratic term. They then proved a propagation of chaos phenomenon for solutions, which are interpreted as the density of a probability distribution.

\medskip

Mathematically, these systems are also of great interest. On the one hand, in relation to certain relevant models arising from fluid dynamics, we can mention the \emph{incompressible Navier-Stokes equations} (\ref{NS}), which mathematically express the momentum balance for Newtonian and incompressible fluids \cite{PG1}. Additionally, the coupled \emph{Magneto-hydrodynamic equations} (\ref{MHD}) and the \emph{Boussinesq system} (\ref{Keller-Segel}) describe the dynamics of incompressible fluids under the effects of magnetic fields and temperature, respectively \cite{Pedlosky,Shercliff}. The fractional versions of these equations and systems (incorporating the fractional Laplacian operator in the diffusion term) have been employed as significant modifications of the classical equations. These modifications aim to provide a deeper understanding of some outstanding mathematical open questions, such as the global existence and regularity of solutions \cite{Bessaih,Holst,Xu}.

\medskip  

On the other hand, with respect to certain relevant biological models, it is worth mentioning the \emph{Keller-Segel system} (\ref{Keller-Segel}), which models chemotactic aggregation in cellular systems \cite{Keller-Segel}. The fractional version of this system was first proposed in \cite{Escudero} to describe the distribution of population density undergoing random motion governed by a L\'evy process. Mathematical studies of the fractional Keller-Segel system have primarily focused on local and global well-posedness, as well as certain regularity issues \cite{Wu,Zhai}.

\medskip

In the non-exhaustive list of previous works cited above, we emphasize that fractional powers $\alpha$ of the Laplacian operator are employed to more precisely describe how the weak or strong effects of the diffusion term influence the qualitative properties of solutions, particularly their well-posedness and regularity. In this paper, we focus on a different problem concerning fractional diffusion parabolic systems. Specifically, we aim to understand how these systems \emph{approach} their classical diffusion version, which involves the classical Laplacian operator $-\Delta$ in the diffusion term.

\medskip

This problem is particularly interesting, not only from a theoretical perspective, but also in the context of \emph{experimental studies} with fractional Burgers equations \cite{Funaki} and the fractional transport-type equation \cite{Zaslavsky}. These numerical studies reveal that the solutions to fractional equations exhibit behavior similar to that of classical equation (which involve the Laplace operator) when the fractional powers $\alpha$ of the Laplacian operator  are close to the classical value of $2$. 
 
\medskip

From the theoretical point of view, this question has also been explored for certain \emph{elliptic} equations, such as the nonlinear Schrödinger equation \cite{Bieganowski} and the fractional $p$-Laplacian problem \cite{Fernandez-Salort}. In these studies, variational methods and concentration-compactness principles were mainly employed to establish the convergence of \emph{weak solutions} of the fractional problem to those of the classical problem. More precisely, in \cite{Bieganowski}, the authors demonstrated this convergence in the strong topology of the space $L^2_{loc}(\mathbb{R}^d)$ (with $d \geq 3$), while in \cite{Fernandez-Salort}, $\Gamma$-convergence was employed for this purpose.

\medskip

Within the setting of \emph{abstract} semilinear parabolic equations, we would like to mention the recent work \cite{Xu-Ca}, where the authors consider the reaction-diffusion equation:
\[ \partial_t u + (-\Delta)^{\frac{\alpha}{2}} u= f(u)+h. \]
Here, $f=f(t,x,u)$ is a $\mathcal{C}^1$-nonlinear function, and $h$ represents a source term. For each value of the fractional power $0<\alpha<2$, we denote by $u_\alpha : [0,+\infty[\times \R^d \to \R $ the corresponding (global in time) weak solution arising from an initial datum $u_{0,\alpha}\in L^2(\R^d)$, which belongs to the functional space $\mathcal{C}_t L^2_x$. Moreover, we denote by $u_2 \in \mathcal{C}_t L^2_x$ the weak solution of the classical  reaction-diffusion problem
\[ \partial_t u   -\Delta u  = f(u)+h, \]
arising from an initial datum $u_{0,2}$. Then, under a set of required technical conditions on $f$ and $h$, among them the uniform bound
\[  \frac{\partial}{\partial u} f(t,x,u)\leq  \sigma, \]
for all $t\geq0$, $x\in \R^d$, $u\in \R$ and with $\sigma \geq 0$,   one of  the main results of   \cite{Xu-Ca}  (precisely stated in Theorem $4.2$) proves the convergence
\[ u_\alpha(t,x)\to u_2(t,x), \quad \text{in the limit} \ \ \alpha \to 2^{-}, \]
in the weak-$*$ topology of the space $L^\infty([0,T],L^2(\R^d))$, and in  the weak topology of the space $L^2([0,T],L^2(\R^d))$, for any time $T>0$. 

\medskip

The ideas behind the proof of this result are mainly based on the \emph{hemicontinuity} property of the function $f$, along with sharp \emph{a priori} energy estimates, the weak formulation of solutions, and concentration-compactness arguments. See \cite{Rozkosz} for an interesting generalization where the fractional Laplacian operator is replaced by a L\'evy-type integro-differential operator. See also \cite{Bezerra} for a related work concerning the convergence of global attractors of fractional diffusion parabolic systems to the classical diffusion case.

\subsection{Setting} Inspired by these ideas, we study the convergence below within the framework of the following general system of coupled semilinear equations with \emph{quadratic} nonlinearity:
\begin{equation}\label{System}
\begin{cases}
\ds{\partial_t u_i + (-\Delta)^\frac{\alpha_i}{2} u_i + \sum_{j=1}^{n} \sum_{k=1}^{n} Q_{i,j,k}(u_j  u_k)+\sum_{j=1}^{n} L_{i,j}(u_j)=0},\\
u_i(0,\cdot)=u_{0,\alpha_i},
\end{cases}
 \qquad \alpha_i>0, \quad  i=1,\cdots, n.
\end{equation}
Describing each term above, we find that:
\begin{itemize}
	\item The function $\ds{u_i: [0,+\infty] \times \R^d \to \R}$ denotes the $i$-th unknown of the system. Additionally, the function $u_{0,\alpha_i}: \R^d \to \R$ represents the $i$-th initial datum.
	
	\medskip
	
	\item For each parameter $\alpha_i>0$, the fractional Laplacian operator $(-\Delta)^{\frac{\alpha_i}{2}}$, a homogeneous pseudo-differential operator of order $\alpha_i$, is defined in the Fourier variable by
	\begin{equation}\label{Lap-Frac}
	\mathcal{F}\Big( (-\Delta)^{\frac{\alpha_i}{2}} \varphi \Big)(\xi) = C_{\alpha_i, d}\, |\xi|^{\alpha_i}  \mathcal{F}(\varphi)(\xi), \quad \varphi \in \mathcal{S}(\R^d),
	\end{equation}
	where $\mathcal{F}(\cdot)$ denotes the Fourier transform in the spatial variable, and $C_{\alpha_i, d} > 0$ is a constant depending on the parameter $\alpha_i$ and the dimension $d$.

	 \medskip
	 
	\item  The term $Q_{i,j,k}(\cdot)$ is defined by a homogeneous pseudo-differential operator of order $1$. Specifically, at the Fourier level, we consider
	\begin{equation}\label{Term-Q}
	\mathcal{F}\Big( Q_{i,j,k}( \varphi ) \Big) (\xi) = \widehat{q}_{i,j,k}(\xi) \, \mathcal{F}\big(\varphi \big) (\xi), \quad \varphi \in \mathcal{S}(\R^d),
	\end{equation}
	where $\widehat{q}_{i,j,k}$ also denotes the Fourier transform of $q_{i,j,k}$, and  the symbol $\widehat{q}_{i,j,k}: \R^d \setminus \{0\} \to \mathbb{C}$ is a smooth, homogeneous function of order one:  for any $\xi \neq 0$ and any $\lambda > 0$, one has $\widehat{q}_{i,j,k}(\lambda \xi) = \lambda \widehat{q}_{i,j,k}(\xi)$.
	\begin{Remarque}
The \emph{quadratic} nature of the nonlinear term $Q_{i,j,k}(u_ju_k)$ in the system (\ref{System}) is the main difference when compared to the parabolic system $\partial_t u + (-\Delta)^{\frac{\alpha}{2}} u = f(u) + h$ introduced above. In particular, the required assumption on the function $f$, $\frac{\partial}{\partial u} f(t, x, u) \leq \sigma$, restricts the consideration of nonlinear quadratic expressions.
	\end{Remarque}	

\medskip

\item Finally, the term $L_{i,j}(\cdot)$ is defined by a linear and homogeneous pseudo-differential operator of order $0$. In the Fourier variable, we have
\begin{equation}\label{Term-L}
\mathcal{F}\Big( L_{i,j} (\varphi)\Big)(\xi) = \widehat{\ell}_{i,j}(\xi) \mathcal{F}(\varphi)(\xi), \quad \varphi \in \mathcal{S}(\R^d),
\end{equation}
where the symbol $\widehat{\ell}_{i,j}: \R^d \setminus \{0\} \to \mathbb{C}$ is a bounded and smooth function satisfying $\widehat{\ell}_{i,j}(\lambda \xi) = \widehat{\ell}_{i,j}(\xi)$ for any $\xi \neq 0$ and $\lambda > 0$.
\end{itemize}	
 
\medskip 

A simplified version of the system (\ref{System}) was introduced in \cite{PG1} to provide a general framework for studying global-in-time mild solutions. As mentioned, in this work, we examine a completely different problem for the system (\ref{System}). Let $(u_{\alpha_1}, \dots, u_{\alpha_n})$ be a solution to this system. We are interested in studying the asymptotic behavior of these solutions as the parameters $\alpha_i$ approach $2$. When setting $\alpha_i = 2$ in the system (\ref{System}), we formally recover the semi-linear system
\begin{equation}\label{System-Classical}
\begin{cases}
\ds{\partial_t u_i - \Delta u_i + \sum_{j=1}^{n}\sum_{k=1}^{n} Q_{i,j,k}(u_j u_k) + \sum_{j=1}^{n} L_{i,j}(u_j) = 0},\\
u_i(0,\cdot)=u_{0,2,i},
\end{cases}
\end{equation}
which involves the classical Laplacian operator in each diffusion term. We then denote its solution as the vector field $(u_{2,1}, \dots, u_{2,n})$.
Thus, we aim to rigorously derive the convergence
\begin{equation}\label{Convergence-Intro-System}
u_{\alpha_i}(t,x) \to u_{2,i}(t,x), \quad \text{in the limit} \quad \alpha_i \to 2.
\end{equation}

In contrast to previous related works \cite{Bezerra,Rozkosz,Xu-Ca}, we introduce new ideas to study this convergence. Specifically, we exploit the explicit structure of mild solutions to the system (\ref{System}), as defined in expression (\ref{System-Integral}) below. This approach, combined with sharp estimates at the Fourier level for the convolution kernels appearing in the mild formulation, enables us to analyze the convergence of (\ref{Convergence-Intro-System}) in the strong topology of Sobolev spaces. Furthermore, this methodology allows us to derive explicit convergence rates for the solutions, which depend essentially on the prescribed convergence rates of the fractional powers $\alpha_i$ approaching the limit value of $2$. 

\subsection{Some related models}\label{Sec:Models} To close this introductory section, it is worth mentioning that the system (\ref{System}) and its classical diffusion version (\ref{System-Classical}) are of particular interest, as they encompass several relevant real-world models in fluid dynamics and biology. In this context, the study of the convergence (\ref{Convergence-Intro-System}) conducted in this paper systematically applies to the following equations and systems:
\begin{itemize}
	\item When $n=3$ and  $\widehat{\ell_{i,j}}(\xi)\equiv 0$ in (\ref{Term-L}), for appropriately chosen symbols $\widehat{q}_{i,j,k}(\xi)$ in expression (\ref{Term-Q}) to recover the operator $\P \text{div}(\cdot)$, where $\P$ denotes the well-known Leray's projector, one obtains the  \emph{incompressible Navier-Stokes equations}:
	\begin{equation}\label{NS}
	\partial \vu - \Delta \vu + \P \text{div}(\vu \otimes \vu)=0, \quad t\geq 0, \quad  x \in \Rt.
	\end{equation}
Here, $\vu=(u_1, u_2, u_3)$ represents the  velocity of the fluid, which is assumed to be a  divergence-free vector field.  
	
	\medskip
	
	\item Similarly, when $n=6$ and  $\widehat{\ell_{i,j}}(\xi)\equiv 0$ in (\ref{Term-L}), for a divergence-free velocity $\vu=(u_1,u_2,u_3)$ and  a divergence-free magnetic field $\vec{b}=(u_4, u_5, u_6)$, by selecting the same  symbols $\widehat{q}_{i,j,k}(\xi)$ as above in expression (\ref{Term-Q}), one can  recover the \emph{Magneto-hydrodynamic} equations:
	\begin{equation}\label{MHD}
	\begin{cases}\vspace{2mm}
	\partial_t \vu - \Delta \vu + \P \text{div}(\vu \otimes \vu)-\P \text{div}(\vec{b} \otimes \vec{b})=0,\\
	\partial_t \vec{b} - \Delta \vec{b} + \P \text{div}(\vec{b} \otimes \vu)-\P \text{div}(\vu \otimes \vec{b})=0,
	\end{cases} \quad  t\geq 0, \quad  x \in \Rt.
	\end{equation}
	
	\medskip

	\item  On the other hand, in the case of $n=4$, with appropriate  symbols  $\widehat{\ell_{i,j}}(\xi)$ in expression (\ref{Term-L}) to recover  the Leray's projector $\P$, and suitable symbols  $\widehat{q}_{i,j,k}(\xi)$ in (\ref{Term-Q}), for the fixed vector $\vec{e}_3=(0,0,1)$,   a divergence-free velocity $\vu=(u_1,u_2, u_3)$, and a scalar temperature $\theta=u_4$, we obtain the Boussinesq system:
	\begin{equation}\label{Boussinesq}
	\begin{cases}\vspace{2mm}
		\partial_t \vu - \Delta \vu + \P \text{div}(\vu \otimes \vu) - \P(\theta \vec{e}_3)=0, \\
		\partial_t \theta - \Delta \theta + \text{div}(\theta \vu)=0,
	\end{cases} \quad  t\geq 0, \quad  x \in \Rt.
	\end{equation}
	
	\item Finally, recall that the \emph{parabolic-elliptic Keller-Segel system} is written as:
	\begin{equation*}
	\begin{cases}
	\partial_t u - \Delta u + \text{div} (u \vec{\nabla} \phi)=0, \\
	-\Delta \phi =u, 
	\end{cases}  \quad t\geq 0, \quad x \in \R^d \ \ \text{with} \ \ d \geq 2,
	\end{equation*}
	where the scalar function $u$ denotes the density of microorganisms, which drift along the gradient of the chemoattractant density $\phi$.
	  Defining the vector field 
	  \[\vu=(u_1,\cdots,u_d)=\vec{\nabla} \phi=- \frac{1}{\Delta} \vec{\nabla} u, \]
	 we get the parabolic system
	\begin{equation}\label{Keller-Segel}
	\partial_t \vu - \Delta \vu +\sum_{i=1}^{d} \frac{1}{-\Delta} \vec{\nabla} \text{div}  \partial_i (u_i \vu) + \frac{1}{2} \sum_{i=1}^{d} \vec{\nabla}( u^2_i)=0,
	\end{equation}
	which follows from (\ref{System-Classical}) in the case $n=d$, with  $\widehat{\ell_{i,j}}(\xi)\equiv 0$ in  (\ref{Term-L}) and suitable symbols  $\widehat{q}_{i,j,k}(\xi)$  in (\ref{Term-Q}). 
	\end{itemize}	
This list of real-world models is not exhaustive. For instance, the system (\ref{System}) also includes as a particular case some quasi-geostrophic type equations driven by non-local, divergence-free velocity fields \cite{Chamorro}.

 \subsection{Statement of the results}\label{Sec:Main-Results}  Recall that mild solutions to the system (\ref{System}) solve the following (equivalent) coupled system of integral equations:
\begin{equation}\label{System-Integral}
\begin{cases}
\begin{split}
u_i (t,\cdot)= & \,  h_{\alpha_i}(t,\cdot)\ast u_{0,\alpha_i} +  \sum_{j=1}^{n}\sum_{k=1}^{n}\int_{0}^{t}h_{\alpha_i}(t-\tau,\cdot)\ast Q_{i,j,k}(u_j u_k)(\tau,\cdot) d\tau\\
&+\, \sum_{j=1}^{n} \int_{0}^{t} h_{\alpha_i}(t-\tau,\cdot)\ast L_{i,j}(u_j)(\tau,\cdot)d\tau,
\end{split} \\
i=1,\cdots,n.
\end{cases}
\end{equation}
In this system, when $\alpha_i\neq 2$,  the convolution kernel $h_{\alpha_i}(t,\cdot)$ is a fundamental solution of the  fractional heat equation 
\[ \partial_t h_{\alpha_i} + (-\Delta)^{\frac{\alpha_i}{2}} h_{\alpha_i}=0. \]
Similarly, when $\alpha_i=2$, the function $h_2(t,\cdot)$ denotes the well-known heat kernel.

\medskip

To establish our notation, we consider  the vector $(\alpha_1, \cdots, \alpha_n)$,  which contains all the parameters $\alpha_i$. We then  denote by $(u_{0,\alpha_1}, \cdots, u_{0,\alpha_n})$ the vector field of initial data. Consequently, the vector $(u_{\alpha_1}, \cdots, u_{\alpha_n})$ represents the corresponding solutions to the integral system (\ref{System-Integral}).

\medskip

The existence of local-in-time solutions $(u_{\alpha_1}, \cdots, u_{\alpha_n})$ follows from a standard contraction argument in the space $\mathcal{C}([0,T], H^s(\R^d))$, with $T>0$ sufficiently small. In this context, the main purpose of the following proposition is to provide an explicit expression for the existence time $T$, which depends on  $\alpha_i$ and the initial data $u_{0,\alpha_i}$. For simplicity, we will denote $\alpha=(\alpha_1, \cdots, \alpha_n)$ the vector containing all the powers  of the Laplacian operator.

\begin{Proposition}\label{Prop:LWP} For  $s>d/2$,  let  $(u_{0,\alpha_1}, \cdots, u_{0,\alpha_n}) \subset  H^s(\R^d)$ be the initial data. For $i=1,\cdots,n$,  assume that $\alpha_i>1$.   Then,   there exists a time 
\begin{equation}\label{T-alpha}
		0<T_{\alpha}  = \dfrac{1}{2}\,   \min \left[\dfrac{1}{3nC},\ \min_{i,j=1,\cdots,n} \ds{\left( \frac{1-\frac{1}{\alpha_i} }{9n^2  C   \| u_{0,\alpha_j} \|_{H^{s}}  }\right) ^{\frac{\alpha_i}{\alpha_i-1}} }\right],
		\end{equation}
 where $C > 0$ is a generic constant, in addition, there exists  a  function $\ds{u_{\alpha_i} \in  \mathcal{C}\big([0, T_{\alpha}], H^{s}(\R^d)\big)}$,  such that $(u_{\alpha_1}, \cdots, u_{\alpha_n})$ is the unique mild solution   to the coupled integral  system (\ref{System-Integral}).
\end{Proposition}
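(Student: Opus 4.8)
The plan is to obtain $(u_{\alpha_1},\dots,u_{\alpha_n})$ as the unique fixed point of the nonlinear map $\mathcal{T}=(\mathcal{T}_1,\dots,\mathcal{T}_n)$ defined by the right-hand side of the integral system (\ref{System-Integral}), via the Banach fixed-point theorem in the space $X_T=\mathcal{C}([0,T],H^s(\R^d))^n$ endowed with the norm $\|(v_1,\dots,v_n)\|_{X_T}=\max_{1\le i\le n}\sup_{0\le t\le T}\|v_i(t,\cdot)\|_{H^s}$. I would work on the closed ball $B_R\subset X_T$ of radius $R=2\max_{i}\|u_{0,\alpha_i}\|_{H^s}$ and show that, for every $T\le T_\alpha$, the map $\mathcal{T}$ sends $B_R$ into itself and is a strict contraction there; the unique mild solution is then the resulting fixed point.

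Everything reduces to three Fourier-side estimates for the building blocks of $\mathcal{T}_i$. First, since the multiplier of the propagator $h_{\alpha_i}(t,\cdot)\ast$ is $e^{-tC_{\alpha_i,d}|\xi|^{\alpha_i}}$, of modulus bounded by $1$, convolution with $h_{\alpha_i}(t,\cdot)$ is a contraction on $H^s$, so the free-evolution term contributes exactly $\|u_{0,\alpha_i}\|_{H^s}$. Second, for the quadratic term I would combine the algebra property of $H^s$ for $s>d/2$, namely $\|u_ju_k\|_{H^s}\lesssim\|u_j\|_{H^s}\|u_k\|_{H^s}$, with the smoothing produced by composing the order-one operator $Q_{i,j,k}$ with the propagator: since $|\widehat{q}_{i,j,k}(\xi)|\lesssim|\xi|$, one has $\sup_{\xi}|\xi|\,e^{-tC_{\alpha_i,d}|\xi|^{\alpha_i}}\lesssim t^{-1/\alpha_i}$, the supremum being attained at $|\xi|\sim t^{-1/\alpha_i}$ with a constant that stays bounded for $\alpha_i\in(1,2]$. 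Integrating in time yields the bilinear bound with the factor $\int_0^t(t-\tau)^{-1/\alpha_i}\,d\tau=\frac{t^{1-1/\alpha_i}}{1-1/\alpha_i}$, which is finite precisely because the hypothesis $\alpha_i>1$ forces $1/\alpha_i<1$. Third, since $\widehat{\ell}_{i,j}$ is bounded (order-zero operator), the linear term is controlled by $\int_0^t\|u_j(\tau)\|_{H^s}\,d\tau\le t\,\|v\|_{X_T}$.

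Collecting these, $\|\mathcal{T}_i(v)\|_{H^s}$ is bounded by $\|u_{0,\alpha_i}\|_{H^s}+C\,n^2\,\frac{t^{1-1/\alpha_i}}{1-1/\alpha_i}\,\|v\|_{X_T}^2+C\,n\,t\,\|v\|_{X_T}$, while the difference $\mathcal{T}_i(v)-\mathcal{T}_i(w)$ obeys an analogous estimate after writing $v_jv_k-w_jw_k=(v_j-w_j)v_k+w_j(v_k-w_k)$. Requiring each contribution to absorb a controlled fraction of $R$ and the Lipschitz constant to be $<1$ produces two competing conditions of the form $C\,n\,t\le\tfrac16$ and $C\,n^2\,\frac{t^{1-1/\alpha_i}}{1-1/\alpha_i}\,R\le\tfrac16$; solving the second for $t$ inverts the power $t^{(\alpha_i-1)/\alpha_i}$ and generates the exponent $\frac{\alpha_i}{\alpha_i-1}$ together with the factor $(1-1/\alpha_i)$, which is exactly the structure displayed in (\ref{T-alpha}). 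Taking the minimum over the indices $i,j$ and inserting the safety factor $\tfrac12$ then yields the stated $T_\alpha$.

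The main obstacle, and the step I would treat most carefully, is the sharp smoothing estimate for $|\xi|\,e^{-tC_{\alpha_i,d}|\xi|^{\alpha_i}}$ with fully explicit and $\alpha_i$-uniform constants, since it is this estimate — together with the time integral $\frac{t^{1-1/\alpha_i}}{1-1/\alpha_i}$ — that dictates the precise form of the existence time. One must verify that the constant coming from the maximization remains bounded as $\alpha_i\to2^-$ and does not blow up for $\alpha_i$ near $1$, the genuine degeneration (non-integrability of $(t-\tau)^{-1/\alpha_i}$) occurring only at $\alpha_i=1$, which is excluded by hypothesis. Once this is secured, the self-mapping, contraction, and uniqueness statements follow from the standard Picard argument.
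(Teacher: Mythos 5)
Your proposal is correct and follows essentially the same route as the paper: a Picard/fixed-point argument in $\mathcal{C}([0,T],H^s(\R^d))^n$ built on the algebra property of $H^s$ for $s>d/2$, the smoothing rate $t^{-1/\alpha_i}$ for the order-one operator composed with the fractional propagator, and the time integral $\int_0^t(t-\tau)^{-1/\alpha_i}\,d\tau=\frac{t^{1-1/\alpha_i}}{1-1/\alpha_i}$, whose inversion produces exactly the exponent $\frac{\alpha_i}{\alpha_i-1}$ and the factor $1-\frac{1}{\alpha_i}$ in $T_\alpha$. The only cosmetic differences are that the paper invokes an abstract Picard lemma (quoted from Chamorro--Yangari) with a sum norm instead of running Banach's theorem on a ball with a max norm, and that it controls the propagator through the $L^1$ kernel bounds $\|h_{\alpha_i}(t,\cdot)\|_{L^1}\leq C$ and $\|\vec{\nabla}h_{\alpha_i}(t,\cdot)\|_{L^1}\leq C\,t^{-1/\alpha_i}$ cited from Yu--Zhai, rather than your equivalent (and self-contained) Fourier-multiplier $L^\infty$ estimates.
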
	
\begin{Remarque} In  expression  (\ref{T-alpha}), note that $T_{\alpha} >0$ as long as $\alpha_i>1$. 
\end{Remarque}	

We now present our main result. For clarity, we outline the context and the set of assumptions underlying this result. Within the framework of this proposition:
\begin{itemize}
	\item  In the fractional diffusion case (when $\alpha_i \neq 2$), let $\ds{\big\{  (u_{0,\alpha_1}, \cdots, u_{0,\alpha_n}): \  \alpha_i \neq  2, \ i=1,\cdots,n \big\}}$ denote a family of initial data, and let $\big\{ (u_{\alpha_1},\cdots,u_{\alpha_n}): \   \alpha_i \neq  2, \ i=1,\cdots,n \big\}$ represent the corresponding family of solutions to (\ref{System-Integral}).
	\item  Similarly, in the classical diffusion case (when $\alpha_i=2$), we consider initial data $(u_{0,2,1},\cdots u_{0,2,n})$ and its corresponding solution $(u_{2,1}, \cdots, u_{2,n})$.
\end{itemize}	
We assume the convergence of the initial data in the strong topology of the space $H^s(\R^d)$. Specifically, this convergence is characterized by \emph{prescribed rates}, given by
\begin{equation}\label{Convergence-Data}
\| u_{0,\alpha_i} - u_{0,2,i}\|_{H^s} \leq {\bf c} |2-\alpha_i|^{\beta_i}, \qquad i=1,\cdots,n,
\end{equation}
where ${\bf c}>0$ is a generic constant, and $\beta_i>0$ quantifies the rate at which the fractional power $\alpha_i$ approaches the limit value of $2$.    We therefore investigate whether these convergence rates are preserved for the family of solutions. To this end,   the following remarks are in order. 

\medskip

Note that in the fractional diffusion case, each component  $u_{\alpha_i}$ of the solution is defined at least over the time interval $[0,T_{\alpha}]$, whereas in the classical diffusion case, each component $u_{2,i}$ exists over the interval $[0,T_{2}]$, where by expression (\ref{T-alpha}) one has
\begin{equation}\label{T_2}
0<T_{2}  = \dfrac{1}{2}\,   \min \left[\dfrac{1}{3nC},\ \min_{j=1,\cdots,n} \ds{\left( \frac{ 1 }{18n^2  C   \| u_{0,2,j} \|_{H^{s}}  }\right) ^{2}}\right].
\end{equation}
 By our assumption (\ref{Convergence-Data}) and the explicit expression for the existence times given in (\ref{T-alpha}), a simple computation shows the convergence
\[ T_{\alpha}\to T_{2}, \quad \text{in the limit} \quad \alpha_i \to 2. \]
It is noteworthy that the existence times in the fractional diffusion case also approach those in the classical case. With a minor loss of generality, we shall assume that 
\begin{equation}\label{Assumption-Times}
T_{\alpha} \leq T_2,
\end{equation}
which seems to be the most interesting case when the existence times grow up to their limit. The other possible cases involve minor technical adjustments, which do not affect the main result.

\medskip

We aim to analyze the asymptotic behavior of the solutions $\big\{ (u_{\alpha_1},\cdots,\alpha_n): \   \alpha_i \neq  2, \ i=1,\cdots,n  \big\}$  as each fractional power $\alpha_i$   approaches its limiting value of $2$. Therefore, we assume that each $\alpha_i$ lies within a neighborhood of $2$. Specifically, for a small technical parameter $\delta$, we impose the condition:
\begin{equation}\label{Condition-alpha}
2-\delta < \alpha_i  < 2+\delta, \qquad i=1,\cdots,n, \quad \text{with} \quad 0<\delta < \frac{1}{6}.
\end{equation}
This restriction ensures that the $\alpha_i$ remain sufficiently close to the limiting value, with this closeness measured by $\delta$. Specifically, we consider  the supercritical case,  where $2-\delta <\alpha_i<2$, and the subcritical case, where $2<\alpha_i<2+\delta$. Furthermore, considering this parameter $\delta$ leads us to define the following additional technical quantity:
\begin{equation}\label{Quantities}
\eta:= \frac{1+4\delta}{4-2\delta}, 
\end{equation}
which will appear in our estimates.  Note that, since $0<\delta < \frac{1}{6}$, it follows that  $\frac{1}{4}<\eta <\frac{1}{2}$.

\medskip

Finally, in order to quantify the convergence rate of solutions, for each parameter $\beta_i>0$ given in (\ref{Convergence-Data}), we introduce the following function
\begin{equation}\label{Function}
F_i (z):=\max\left(z, z^{\beta_i}\right), \quad z \geq 0, \quad i=1,\cdots,n. 
\end{equation} 

\medskip

Once we have set the main assumptions and notation,  our main result is the following:
\begin{Theoreme}\label{Th-Main}  Assume (\ref{Convergence-Data}), (\ref{Assumption-Times}) and (\ref{Condition-alpha}).   There exists a constant ${\bf C}>0$, which essentially depends on the initial data $(u_{0,2,1}, \cdots, u_{0,2,n})$, the parameter $\delta$ and the quantity $\eta$ defined in (\ref{Quantities}),  such that  the following estimate holds:
	\begin{equation}\label{Conv-Rate-Sol}
	\sup_{0\leq t \leq T_2} \, \sum_{i=1}^{n}\,   t^\eta \, \| u_{\alpha_i}(t,\cdot) - u_{2,i}(t,\cdot)\|_{H^s} \leq {\bf C}\Big(1+T^{\eta+1}_{2}\Big) \, \max_{i=1,\cdots,n} F_i(|2-\alpha_i|),
	\end{equation}
	with the time $T_2$ given in expression (\ref{T_2}), and with the function $F_i(\cdot)$ defined in expression (\ref{Function}).
\end{Theoreme}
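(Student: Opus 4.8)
The plan is to subtract the two mild formulations in (\ref{System-Integral}) --- the one for $u_{\alpha_i}$ and the one for $u_{2,i}$ --- and to organize the resulting difference into contributions whose size is controlled either by the prescribed data rate (\ref{Convergence-Data}) or by a Fourier-level estimate of the kernel difference. Writing $w_i := u_{\alpha_i}-u_{2,i}$, the linear evolution part splits as
\[ h_{\alpha_i}(t)\ast u_{0,\alpha_i} - h_2(t)\ast u_{0,2,i} = h_{\alpha_i}(t)\ast(u_{0,\alpha_i}-u_{0,2,i}) + \big(h_{\alpha_i}(t)-h_2(t)\big)\ast u_{0,2,i}, \]
where the first term is immediately bounded by $\mathbf{c}\,|2-\alpha_i|^{\beta_i}$ using (\ref{Convergence-Data}) and the contractivity of the fractional heat semigroup on $H^s$. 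Each Duhamel term is decomposed the same way: the quadratic term produces a factor with the \emph{same} kernel $h_{\alpha_i}$ acting on the difference of products $u_{\alpha_j}u_{\alpha_k}-u_{2,j}u_{2,k}$ --- which is linear in the unknown differences $w_j,w_k$ after writing $u_{\alpha_j}u_{\alpha_k}-u_{2,j}u_{2,k}=w_j\,u_{\alpha_k}+u_{2,j}\,w_k$ --- plus a factor with the kernel difference $h_{\alpha_i}-h_2$ acting on the purely classical product $u_{2,j}u_{2,k}$; the linear $L_{i,j}$ term splits analogously. The algebra property of $H^s(\R^d)$ for $s>d/2$ and the uniform-in-$\alpha$ bounds on $\|u_{\alpha_j}(t)\|_{H^s}$, $\|u_{2,j}(t)\|_{H^s}$ on $[0,T_2]$ coming from Proposition \ref{Prop:LWP} control all the product norms.

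The technical heart is a multiplier estimate for the kernel difference on $H^s$. Since $h_{\alpha_i}-h_2$ acts in Fourier as multiplication by $e^{-tC_{\alpha_i,d}|\xi|^{\alpha_i}}-e^{-t|\xi|^2}$, the $H^s$-multiplier norm reduces to the $L^\infty_\xi$-norm of this symbol. Using the elementary bound $|e^{-a}-e^{-b}|\le|a-b|\,e^{-\min(a,b)}$, together with the continuity of $\xi\mapsto|\xi|^\alpha$ and of $\alpha\mapsto C_{\alpha,d}$ near $\alpha=2$, I would extract a factor $|2-\alpha_i|$ from $|C_{\alpha_i,d}|\xi|^{\alpha_i}-|\xi|^2|$ (controlling the logarithmic growth $\big||\xi|^{\alpha_i-2}-1\big|\lesssim|2-\alpha_i|\log\langle\xi\rangle$ by the exponential decay $e^{-\min(\cdots)}$), thereby obtaining a bound of the form $C\,t^{-\eta}\,|2-\alpha_i|$ for the kernel-difference multiplier, where the singular time weight $t^{-\eta}$ arises from optimizing $\sup_\xi$ of the product of the polynomial and logarithmic factors against the Gaussian-type decay, and is exactly compensated by the weight $t^{\eta}$ in (\ref{Conv-Rate-Sol}). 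A companion estimate is needed for the composition with the order-one operator $Q_{i,j,k}$ inside the Duhamel integral: there the symbol $\widehat q_{i,j,k}(\xi)\big(e^{-(t-\tau)C_{\alpha_i,d}|\xi|^{\alpha_i}}-e^{-(t-\tau)|\xi|^2}\big)$ carries one extra power of $|\xi|$, which the fractional smoothing absorbs at the cost of a factor $(t-\tau)^{-1/\alpha_i}$; integrating the resulting Beta-type kernels $\int_0^t(t-\tau)^{-1/\alpha_i}\tau^{-\eta}\,d\tau$ against the time weights produces the powers of $T_2$ appearing in the prefactor $1+T_2^{\eta+1}$.

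Collecting these estimates and taking the weighted supremum norm $\mathcal N:=\sup_{0\le t\le T_2}\sum_i t^{\eta}\|w_i(t)\|_{H^s}$, I expect to reach a self-improving inequality of the form $\mathcal N\le \mathbf C\big(1+T_2^{\eta+1}\big)\max_i F_i(|2-\alpha_i|)+\Lambda(T_2,\delta)\,\mathcal N$, where the source term collects the data contribution $|2-\alpha_i|^{\beta_i}$ and the kernel-difference contribution $|2-\alpha_i|$ --- their maximum being exactly $F_i(|2-\alpha_i|)=\max(|2-\alpha_i|,|2-\alpha_i|^{\beta_i})$ --- and where $\Lambda(T_2,\delta)<1$ after shrinking $T_2$ (equivalently the data, via (\ref{T_2})) so that the quadratic feedback is a strict contraction, exactly as in the proof of Proposition \ref{Prop:LWP}. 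Absorbing $\Lambda\,\mathcal N$ then yields (\ref{Conv-Rate-Sol}). The main obstacle is the uniform kernel-difference multiplier estimate: one must control $e^{-tC_{\alpha_i,d}|\xi|^{\alpha_i}}-e^{-t|\xi|^2}$ simultaneously over all $\xi$ and all $\alpha_i\in(2-\delta,2+\delta)$, and in particular handle the supercritical regime $\alpha_i<2$, where high frequencies are damped only like $e^{-t|\xi|^{\alpha_i}}$ rather than $e^{-t|\xi|^2}$. It is precisely this high-frequency imbalance between the sub- and supercritical cases that fixes the admissible range of $\delta$ and the value of the exponent $\eta=\frac{1+4\delta}{4-2\delta}$ in (\ref{Quantities}), and that explains why the solution rate cannot beat the linear factor $|2-\alpha_i|$ no matter how fast the initial data are prepared.
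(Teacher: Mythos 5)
Your overall architecture coincides with the paper's: subtract the two mild formulations, split each term into a piece carrying the kernel difference $h_{\alpha_i}-h_2$ (estimated at the Fourier level, where the $H^s$-multiplier norm reduces to the $L^\infty_\xi$ norm of the symbol, extracting the factor $|2-\alpha_i|$ at the price of a singular time weight) and a piece carrying the difference of data or of solutions (handled by (\ref{Convergence-Data}), the algebra property of $H^s$ for $s>d/2$, and uniform-in-$\alpha$ bounds), then close in the weighted norm $\sup_t\sum_i t^\eta\|w_i(t)\|_{H^s}$ by absorption. The paper's Proposition \ref{Key-Proposition} is exactly your multiplier estimate, proved via the mean value theorem in $\alpha$ applied to $f_{t,\xi}(\alpha)=e^{-t|\xi|^\alpha}$, with the same mechanism you identify: in the supercritical regime the decay is only $e^{-t|\xi|^{2-\delta}}$ against the growth $|\xi|^{2+\delta}\ln|\xi|$, and this imbalance is what fixes $\eta$ (see Remark \ref{Rmk:weight-time}). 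Your splittings are mirror images of the paper's (kernel difference on $u_{0,\alpha_i}$ rather than $u_{0,2,i}$, same kernel $h_{\alpha_i}$ rather than $h_2$ on the product difference), which is immaterial. One quantitative slip: where $Q_{i,j,k}$ composes with the kernel difference, the cost is not $(t-\tau)^{-1/\alpha_i}$ --- that is the weight for plain fractional smoothing --- but the more singular $(t-\tau)^{-\kappa}$ with $\kappa=\frac{3+4\delta}{4-2\delta}<1$ (the paper's second estimate (\ref{Key-Estimate-2})), because the factor $|2-\alpha_i|$ from the logarithm must be retained alongside the extra power of $|\xi|$; this weight is still integrable, so nothing breaks, but your stated exponent and the resulting powers of $T$ are off.

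The genuine gap is the closure. You propose to make the feedback coefficient $\Lambda(T_2,\delta)<1$ ``after shrinking $T_2$ (equivalently the data)''; but $T_2$ is fixed by the data through (\ref{T_2}), and the theorem asserts (\ref{Conv-Rate-Sol}) on all of $[0,T_2]$ with no smallness condition beyond (\ref{Condition-alpha}). As written, your absorption only yields the estimate on some short interval $[0,T]$, a strictly weaker statement. The paper closes precisely there --- it proves (\ref{Main-Esim-T}) on $[0,T]$ with $T<\min(1,T_0)$ chosen so that $n\max(\mathfrak{C}_4,\mathfrak{C}_6)\big(T^{1/2}+T^{\eta+1}\big)\le\frac{1}{2}$, the smallness being in $T$, not in the data --- and then \emph{iterates} the argument finitely many times up to $T_2$ (Step 2). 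This iteration in turn requires that all fractional solutions live on a common, $\alpha$-independent time interval: Proposition \ref{Prop:LWP} only guarantees existence on $[0,T_\alpha]$ with $T_\alpha\le T_2$ by (\ref{Assumption-Times}), so your invocation of uniform bounds ``on $[0,T_2]$'' is unjustified as stated. The paper fills this with a dedicated lemma giving a uniform lower bound $T_0\le T_\alpha$ for all $2-\delta<\alpha_i<2+\delta$ (estimate (\ref{Control-Time})), together with the extension of $u_{\alpha_i}$ beyond $T_\alpha$ in Step 2. You need both ingredients --- the uniform existence time and the iteration --- to upgrade your short-time inequality to the claimed estimate on $[0,T_2]$.
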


The following observations are noteworthy. In estimate (\ref{Conv-Rate-Sol}), we compare the distance between solutions in the fractional and classical diffusion cases with the maximum of the expressions  $F_i(2-\alpha_i)$. These convergence rates reveal a curious behavior. Specifically, using expression (\ref{Function}), we can explicitly write
\[ F_i(|2-\alpha|)= \max\left( |2-\alpha_i|, |2-\alpha_i|^{\beta_i} \right), \]
where, by assumption (\ref{Condition-alpha}), one has $|2-\alpha_i|<\delta <1$. Thus, we derive the following scenarios based on the parameter $\beta_i>0$ introduced in (\ref{Convergence-Data}). 

\medskip

On the one hand, for small values of $\beta_i$, we obtain that $F_i(|2-\alpha_i|)\sim |2-\alpha_i|^{\beta_i}$. Thus,  as somewhat expected, we  conclude that the convergence rate of solutions is determined by the prescribed convergence rate assumed on the initial data. On the other hand, it is interesting to note that  for large values of $\beta_i$, one has $F_i(|2-\alpha_i|)\sim |2-\alpha_i|$.    Consequently, it follows that a stronger prescribed convergence rate of the data is not reflected in the solutions.

\medskip

To explain this unexpected phenomenon, we return to the expression of mild solutions given in (\ref{System-Integral}). The key distinction between the fractional and classical cases lies in the kernels $h_{\alpha_i}(t,\cdot)$ and $h_2(t,\cdot)$. In estimate (\ref{Key-Estimate-1}) of Proposition \ref{Key-Proposition} below, we rigorously prove that $h_{\alpha_i}(t,\cdot)$ converges to $h_2(t,\cdot)$ as $\alpha_i\to 2$, achieving an optimal convergence rate proportional to $|2-\alpha_i|$. As a result, the overall convergence rate of solutions arises from the interplay between the convergence rate of the initial data and the intrinsic convergence rate of the kernels in the mild formulation.

\medskip

Incidentally, we note that estimate (\ref{Key-Estimate-1}) includes the weight $t^\eta$ in the time variable. This (technical) term is essential for controlling the difference $h_{\alpha_i}(t,\cdot)-h_2(t,\cdot)$ in a suitable norm when $\alpha_i$ is close to the limiting value of $2$, as described in assumption (\ref{Condition-alpha}). Consequently, our main estimate (\ref{Conv-Rate-Sol}) is also expressed with this weight.
For more details, please refer to Remark \ref{Rmk:weight-time}. 

\medskip

Although the weight $t^\eta$ in the estimate (\ref{Conv-Rate-Sol}) is essentially technical, it can be easily removed for strictly positive times. Specifically, for any fixed $0<\varepsilon\ll T_2$, from estimate (\ref{Conv-Rate-Sol}), we obtain the following pointwise-in-time estimate:
\begin{equation}\label{Conv-Rate-Sol-Epsilon}
\| u_{\alpha_i}(t,\cdot)-u_{2,i}(t,\cdot)\|_{H^s} \leq \frac{{\bf C}}{\varepsilon^\eta}\Big( 1+T^{\eta+1}_{2}\Big)\, \max_{i=1,\cdots,n} F_i (|2-\alpha_i|),
\end{equation}
which holds for $i=1,\cdots,n$ and for any time $\varepsilon \leq t \leq T_2$. In this estimate, we clearly observe the convergence rate of solutions to the limiting solution in the classical diffusion equation as the initial value problem evolves and we move slightly away from the initial data. For simplicity, from now on, we will focus on this estimate instead of estimate (\ref{Conv-Rate-Sol}).

\medskip  

On the other hand, we note that the existence of global-in-time solutions to the integral coupled system (\ref{System-Integral}) and similar systems with quadratic nonlinearities remains, in general, a highly nontrivial open question. Please refer to \cite{PG1} for a detailed discussion. This question lies completely outside the scope of this article. However, we mention that, if global-in-time solutions exist for any time $T>0$, the estimate (\ref{Conv-Rate-Sol-Epsilon}) can also be extended to the time interval $[\varepsilon,T]$. See Section \ref{Sec:Step-2} for further details. 

\medskip 

The convergence proven in estimate (\ref{Conv-Rate-Sol-Epsilon}) also allows us to establish an analogous result in other functional spaces. For instance, for certain ranges of the parameters $1\leq p \leq +\infty$ and $\sigma>0$, we can consider the Lebesgue spaces $L^p(\R^d)$ and the Sobolev spaces $\dot{W}^{\sigma,p}(\R^d)$. 
\begin{Corollaire}\label{Corollary}With the same assumptions as in Theorem \ref{Th-Main}, and for fixed  $0<\varepsilon \ll T_2$, the following estimates hold:
\begin{equation}\label{Conv-Rate-Sol-Lp}
\sup_{\varepsilon \leq t \leq T_2} \| u_{\alpha_i}(t,\cdot)-u_{2,i}(t,\cdot)\|_{L^p} \leq \frac{{\bf C}}{\varepsilon^\eta}\Big( 1+T^{\eta+1}_{2}\Big)\, \max_{i=1,\cdots,n} F_i (|2-\alpha_i|), \qquad 2\leq p \leq +\infty,
\end{equation}	
and
\begin{equation}\label{Conv-Rate-Sol-Wsp}
\sup_{\varepsilon \leq t \leq T_2} \| u_{\alpha_i}(t,\cdot)-u_{2,i}(t,\cdot)\|_{\dot{W}^{\sigma,p}} \leq \frac{{\bf C}}{\varepsilon^\eta}\Big( 1+T^{\eta+1}_{2}\Big)\, \max_{i=1,\cdots,n} F_i (|2-\alpha_i|), \qquad 0<\sigma<s, \ \ 2\leq p<+\infty.
\end{equation}
\end{Corollaire}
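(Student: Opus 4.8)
The plan is to deduce the corollary directly from estimate (\ref{Conv-Rate-Sol-Epsilon}) of Theorem \ref{Th-Main}, transferring the already-established $H^s$ control of the difference into the weaker topologies of $L^p$ and $\dot{W}^{\sigma,p}$ by means of fixed, time-independent Sobolev embeddings. Writing $w_i(t,\cdot):=u_{\alpha_i}(t,\cdot)-u_{2,i}(t,\cdot)$, Theorem \ref{Th-Main} already gives, for every $\varepsilon\le t\le T_2$, the bound $\norm{w_i(t,\cdot)}_{H^s}\le \frac{{\bf C}}{\varepsilon^\eta}\big(1+T_2^{\eta+1}\big)\,\max_{i} F_i(|2-\alpha_i|)$. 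Hence it suffices to establish the two continuous embeddings $H^s(\R^d)\hookrightarrow L^p(\R^d)$ (for $2\le p\le+\infty$) and $H^s(\R^d)\hookrightarrow \dot{W}^{\sigma,p}(\R^d)$ (for $0<\sigma<s$, $2\le p<+\infty$), apply them to $w_i(t,\cdot)$ for each fixed $t$, and take the supremum over $t\in[\varepsilon,T_2]$. Since the embedding constants do not depend on $t$, they factor out and the right-hand side of (\ref{Conv-Rate-Sol-Epsilon}) is reproduced verbatim, up to absorbing the embedding constant into ${\bf C}$.

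For the $L^p$ estimate the embedding is immediate: since $s>d/2$ one has $H^s(\R^d)\hookrightarrow L^2(\R^d)\cap L^\infty(\R^d)$, and interpolating between $L^2$ and $L^\infty$ yields $H^s\hookrightarrow L^p$ for all $2\le p\le+\infty$. Applying this pointwise in $t$ to $w_i(t,\cdot)$ and taking the time-supremum gives (\ref{Conv-Rate-Sol-Lp}) at once.

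For the $\dot{W}^{\sigma,p}$ estimate I would argue at the frequency level, since the multiplier $|\xi|^\sigma$ defining $(-\Delta)^{\sigma/2}$ is unbounded at infinity and non-smooth at the origin, so a naive multiplier bound is unavailable. The plan is a Littlewood--Paley decomposition $w_i=S_0 w_i+\sum_{j\ge0}\Delta_j w_i$, separating the low frequencies $|\xi|\lesssim 1$ from the high-frequency blocks localized to $|\xi|\sim 2^j$. On the low-frequency part one has $|\xi|^\sigma\le 1$, so Bernstein's inequality gives $\norm{(-\Delta)^{\sigma/2}S_0 w_i}_{L^p}\lesssim \norm{S_0 w_i}_{L^2}\lesssim \norm{w_i}_{H^s}$ for every $2\le p<+\infty$. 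On each high-frequency block Bernstein yields $\norm{(-\Delta)^{\sigma/2}\Delta_j w_i}_{L^p}\lesssim 2^{j(\sigma+d/2-d/p)}\norm{\Delta_j w_i}_{L^2}=2^{j(\sigma+d/2-d/p-s)}\big(2^{js}\norm{\Delta_j w_i}_{L^2}\big)$, and summing in $j$ by Cauchy--Schwarz against the $\dot{H}^s$ sequence $\big(2^{js}\norm{\Delta_j w_i}_{L^2}\big)_j\in\ell^2$ controls the total by $\norm{w_i}_{\dot{H}^s}\le \norm{w_i}_{H^s}$.

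The main obstacle, and the only genuinely non-formal point, is the convergence of this dyadic sum: it holds precisely when the geometric ratio $2^{\sigma+d/2-d/p-s}$ is strictly less than $1$, that is, when $\sigma-\tfrac{d}{p}<s-\tfrac{d}{2}$. This inequality is what pins down the admissible range of the pair $(\sigma,p)$ for which the homogeneous embedding, and hence (\ref{Conv-Rate-Sol-Wsp}), is valid, and I would record the resulting constraint explicitly rather than leave the range merely as ``$0<\sigma<s$, $2\le p<+\infty$''. Once the embedding is secured, the passage to (\ref{Conv-Rate-Sol-Wsp}) is identical to the $L^p$ case: apply it pointwise in $t$ and take the supremum over $\varepsilon\le t\le T_2$, with the $t$-independent embedding constant absorbed into ${\bf C}$.
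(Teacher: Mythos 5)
Your proposal is correct, and for the second estimate it takes a genuinely different route from the paper. The $L^p$ part is exactly the paper's argument: $H^s\subset L^2\cap L^\infty$ (since $s>d/2$) plus $L^2$--$L^\infty$ interpolation, applied pointwise in $t$ with a $t$-independent constant. For (\ref{Conv-Rate-Sol-Wsp}), however, the paper does not use Littlewood--Paley: it invokes a Gagliardo--Nirenberg interpolation inequality
\[
\| f \|_{\dot{W}^{\sigma,p}} \leq C\, \| f \|_{L^{q}}^{\theta}\, \| f \|_{\dot{H}^{s}}^{1-\theta},
\qquad \theta = 1-\tfrac{\sigma}{s}, \qquad \tfrac{1}{p}=\tfrac{\theta}{q}+\tfrac{1-\theta}{2}, \quad 2\leq q\leq+\infty,
\]
and then controls $\|f\|_{L^q}$ by $\|f\|_{H^s}$. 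Your dyadic argument, which amounts to the embedding $\dot{H}^{\sigma+\frac{d}{2}-\frac{d}{p}}\hookrightarrow \dot{W}^{\sigma,p}$ combined with $H^s\subset \dot{H}^{s'}$ for $0\leq s'\leq s$, buys something real: you correctly identify that the stated range ``$0<\sigma<s$, $2\leq p<+\infty$'' hides a constraint, and your constraint is sharper than the paper's. The paper's proof implicitly requires the existence of an admissible $q\in[2,+\infty]$, i.e. $\frac{1}{p}\geq \frac{\sigma}{2s}$ (so $p\leq 2s/\sigma$), whereas your condition $\sigma+\frac{d}{2}-\frac{d}{p}<s$ is the scaling-necessary one (a frequency-localized bump shows it cannot be dropped), and since $s>d/2$ it is strictly more permissive than the paper's; in particular your route gives the full range $2\leq p<+\infty$ whenever $\sigma\leq s-\frac{d}{2}$, which the paper's interpolation does not. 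Two small polish points: your strict inequality can be relaxed to $\sigma+\frac{d}{2}-\frac{d}{p}\leq s$ by using the endpoint homogeneous Sobolev embedding $\dot{H}^{s'}\hookrightarrow\dot{W}^{\sigma,p}$ ($p<\infty$) instead of geometric summation; and on the low frequencies the pointwise bound $|\xi|^{\sigma}\leq 1$ alone does not give an $L^p$ multiplier bound for $p\neq 2$ --- instead, sum the Bernstein estimates over the blocks $j<0$, where the exponent $\sigma+\frac{d}{2}-\frac{d}{p}>0$ makes the series converge, as you do at high frequency.
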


We thus obtain a wide range of parameters $p$ and $\sigma$ for which these convergences hold. Certain values of these parameters are of particular interest, as the general coupled system (\ref{System}) includes, as a special case, some real-world relevant models. 

\subsection{Applications to related models}\label{Sec:Applications} Here, we briefly discuss the application of the estimates (\ref{Conv-Rate-Sol-Epsilon}), (\ref{Conv-Rate-Sol-Lp}) and (\ref{Conv-Rate-Sol-Wsp}) in the particular models introduced in Section  \ref{Sec:Models}. 

\medskip

For the Navier-Stokes equations (\ref{NS}), the estimate (\ref{Conv-Rate-Sol-Epsilon}) shows the convergence from the fractional diffusion case to the classical diffusion case, in the strong topology of the space $H^s(\Rt)$ with $s>3/2$. In particular, this result improves our previous work \cite{Jarrin_L2024}, where a similar convergence was obtained in the weaker topology of the space $L^\infty(\Rt)$. 

\medskip

Also, for the Navier-Stokes equations (\ref{NS}), in estimate (\ref{Conv-Rate-Sol-Lp}), setting $p=2$ yields a convergence result in the space $L^\infty([\varepsilon, T_2], L^2(\Rt))$. Similarly, in estimate (\ref{Conv-Rate-Sol-Wsp}), setting $\sigma=1$ and $p=2$ provides a convergence result in the space $L^\infty([\varepsilon,T_2], \dot{H}^1(\Rt))$, which is continuously embedded in $L^2([\varepsilon,T_2], \dot{H}^1(\Rt))$. Consequently, we obtain a convergence result in the strong topology of the space $L^\infty_t L^2_x \cap L^2_t \dot{H}^1_x$, which is of particular interest as it represents the natural energy space for these equations. It is worth comparing this result with \cite{Dlotko}. In that article, for a fractional power $\alpha > 5/2$, the author considers the following regularized equation:
\[  \partial_t \vu - (\Delta +\delta(-\Delta)^{\frac{\alpha}{2}} ) \vu + \P\, \text{div}(\vu \otimes \vu)=0, \quad \text{div}(\vu)=0,\]
 and shows that, as $\delta \to 0$, solutions to this equation converge to a weak Leray solution of the classical Navier-Stokes equations in the weak topology of the energy space.
 
 \medskip

On the other hand, the new results stated in estimates (\ref{Conv-Rate-Sol-Epsilon}), (\ref{Conv-Rate-Sol-Lp}), and (\ref{Conv-Rate-Sol-Wsp}) hold for the Magneto-hydrodynamic equations (\ref{MHD}), the Boussinesq system (\ref{Boussinesq}), and the Keller-Segel system (\ref{Keller-Segel}), which, to the best of our knowledge, have not been studied before. In particular, weak and mild solutions to the classical Boussinesq system (\ref{Boussinesq}) have been studied in the setting of $L^p$-spaces; see, for instance, \cite{Brandolese,Cannon}. In estimate (\ref{Conv-Rate-Sol-Lp}), we obtain a fractional approximation to these mild solutions in the strong topology of the $L^p$-spaces, which can be seen as a complement to these previous works.

\medskip

\subsection{Other possible applications} As mentioned below Theorem \ref{Th-Main}, the key idea for proving this result is to first study the convergence of the kernel $h_{\alpha_i}(t,\cdot) \to h_2(t,\cdot)$ as $\alpha_i \to 2$. Specifically, this convergence is established in Proposition \ref{Key-Proposition}, where we prove the main technical estimate:
\[ t^\eta \| \widehat{h}_{\alpha_i}(t,\cdot)- \widehat{h}_2(t,\cdot)\|_{L^\infty} \lesssim |\alpha_i - 2|, \]
which, using the characterization of the $H^s-$norm by the Fourier transform, allows us to write
\[ \| (h_{\alpha_i}(t,\cdot) - h_2(t,\cdot))\ast f \|_{H^s} \lesssim  t^{-\eta}\, |\alpha_i-2|\, \|f\|_{H^s}.  \]

This approach also enables us to prove analogous results to Theorem \ref{Th-Main} in the context of other functional spaces characterized by the Fourier transform, such as the Besov spaces $B^{s,p}_{q}(\R^d)$, the pseudo-measures space $PM^s(\R^d)$, and the Gevrey class $G^{s}_{\beta}(\R^d)$. These spaces are commonly used in the qualitative analysis of the Navier-Stokes equations (\ref{NS}) and other related models discussed in Section \ref{Sec:Models}. For more details, please refer to \cite{PGLibro} and the references therein. 

\medskip

On the other hand, this approach could also be applied to other equations that are not included in the general system (\ref{System}). Among these is the nonlinear reaction-diffusion model studied in \cite{Xu-Ca}:
\[ \partial_t u + (-\Delta)^{\frac{\alpha}{2}} = f(u)+h, \]
and its linear version (when $f(\cdot)\equiv 0$)
\[ \partial_t u + (-\Delta)^{\frac{\alpha}{2}} u = h, \]
considered in  \cite{Biccari}.  For these equations and other related models, the convergence of the fractional diffusion case to the classical diffusion case was established in the weak topology of Sobolev spaces, essentially using compactness arguments. In this context, under suitable hypotheses on $f(\cdot)$ and $h(\cdot)$, this convergence appears to be improvable to the strong topology, with explicit convergence rates. Moreover, given that these equations admit global-in-time solutions and an inherent notion of global attractors, it would be interesting in future works to study this strong convergence for the global attractors. 

\medskip

{\bf Organization of the article and notation}. In Section \ref{Sec:LWP}, we prove Proposition \ref{Prop:LWP}, and the entirety of Section \ref{Sec:Proof-Main-Th} is devoted to a step-by-step proof of Theorem \ref{Th-Main}. On the other hand, the notations $\mathcal{F}(\varphi)$ and $\widehat{\varphi}$ both represent the Fourier transform of $\varphi$. Finally, we will use $C$ to denote a generic positive constant, which may vary from one line to another.

%%%%%%%%%%%%%%%%%%%%%
\section{Proof of Proposition \ref{Prop:LWP}}\label{Sec:LWP}
Given $\alpha_i>1$, for all $i = 1, \cdots, n$, we leverage the structure of the mild solution of system \eqref{System} as defined in \eqref{System-Integral} to consider the linear form
\begin{align*}
	%\textbf{A}(\vu_\alpha) = 
	\textbf{A} \big(u_{\alpha_1}, \cdots, u_{\alpha_n}\big)= \big(A_1 (u_{\alpha_1}, \cdots, u_{\alpha_n}), \cdots, A_n (u_{\alpha_1}, \cdots, u_{\alpha_n})\big),
\end{align*}
with $A_i$ defined, for every $i = 1, \cdots, n$, by the following expression:
\begin{align*}
	A_i(u_{\alpha_1}, \cdots, u_{\alpha_n}) := \sum_{j=1}^{n} \int_{0}^{t} h_{\alpha_i}(t-\tau,\cdot)\ast L_{i,j}(u_{\alpha_j})(\tau,\cdot)d\tau.
\end{align*}
Moreover, we consider the bilinear form
\begin{align*}
	%\textbf{B} \big(\vu_{\alpha}, \vv_{\alpha}\big) &= 
	&\textbf{B} \big( (u_{\alpha_1}, \cdots, u_{\alpha_n}), (v_{\alpha_1}, \cdots, v_{\alpha_n})\big)\\ = &\Big(B_1\big( (u_{\alpha_1}, \cdots, u_{\alpha_n}), (v_{\alpha_1}, \cdots, v_{\alpha_n}) \big), \cdots, B_n\big( (u_{\alpha_1}, \cdots, u_{\alpha_n}), (v_{\alpha_1}, \cdots, v_{\alpha_n}) \big)\Big),
\end{align*}
where each $B_i$  is defined as follows:
\begin{align*}
	B_i\big((u_{\alpha_1}, \cdots, u_{\alpha_n}), (v_{\alpha_1}, \cdots, v_{\alpha_n})\big) := \sum_{j=1}^{n}\sum_{k=1}^{n}\int_{0}^{t}h_{\alpha_i}(t-\tau,\cdot)\ast Q_{i,j,k}(u_{\alpha_j} v_{\alpha_k})(\tau,\cdot) d\tau.
\end{align*}

Thus, by setting $e = (u_{\alpha_1}, \cdots, u_{\alpha_n})$ and $e_0 = \big(h_{\alpha_1}(t,\cdot)\ast u_{0,\alpha_1}, \cdots, h_{\alpha_n}(t,\cdot)\ast u_{0,\alpha_n}\big)$, the coupled system of integral equations \eqref{System-Integral} can be written in the form
\begin{align}\label{Picard}
	e = e_0 + \textbf{A}(e) + \textbf{B}(e, e).
\end{align}
Therefore, to construct a solution of \eqref{Picard}, we employ the following version of the Picard's iteration scheme. 	For a proof,  we refer to \cite{Chamorro-Yangari} (proof of Theorem 3.2 in Appendix).
\begin{Lemme}\label{Lemma_Picard} Let $(E, \norm{\ \cdot\ }_E)$ be a Banach space and let $e_0 \in E$ be an initial data such that $\norm{e_0}_E \leq \delta$. Moreover, let $\emph{\textbf{A}}: E \to E$ be a linear form and let $\emph{\textbf{B}}: E \times E \to E$ be a bilinear form such that, for all $e, f \in E$,
	\begin{equation}\label{Cont-Picard}
		\norm{\emph{\textbf{A}}(e)}_E \leq C_\emph{\textbf{A}} \norm{e}_E \qquad \text{and} \qquad \norm{\emph{\textbf{B}}(e,f)}_E \leq C_\emph{\textbf{B}} \norm{e}_E \norm{f}_E.
	\end{equation}		
	
	If the constants $C_\emph{\textbf{A}} > 0$ and $C_\emph{\textbf{B}} > 0$ satisfy the relationships:
	\begin{equation}\label{Conditions-Picard}
		0 < 3 C_\emph{\textbf{A}} < 1,\qquad 0 < 9 C_\emph{\textbf{B}}\ \delta < 1, \qquad \text{and} \qquad C_\emph{\textbf{A}} + 6 C_\emph{\textbf{B}}\ \delta < 1,
	\end{equation} 
	then equation \eqref{Picard} admits a unique solution $e$ such that $\norm{e}_E \leq 3\delta$.
\end{Lemme}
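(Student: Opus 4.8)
The plan is to recast equation \eqref{Picard} as a fixed-point problem and apply the contraction mapping principle on a suitable closed ball. Define the map $\Phi : E \to E$ by $\Phi(e) = e_0 + \textbf{A}(e) + \textbf{B}(e,e)$, so that solutions of \eqref{Picard} are exactly the fixed points of $\Phi$. I would work on the closed ball $\mathcal{B} = \{ e \in E : \norm{e}_E \leq 3\delta \}$, which, being a closed subset of the Banach space $E$, is itself a complete metric space for the distance induced by $\norm{\ \cdot\ }_E$.

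First I would verify that $\Phi$ maps $\mathcal{B}$ into itself. For $e \in \mathcal{B}$, the two estimates in \eqref{Cont-Picard} together with $\norm{e_0}_E \leq \delta$ give
\[
\norm{\Phi(e)}_E \leq \norm{e_0}_E + C_\textbf{A}\norm{e}_E + C_\textbf{B}\norm{e}_E^2 \leq \delta + 3 C_\textbf{A}\,\delta + 9 C_\textbf{B}\,\delta^2 = \delta\big(1 + 3C_\textbf{A} + 9 C_\textbf{B}\delta\big).
\]
Since the first two conditions in \eqref{Conditions-Picard} give $3 C_\textbf{A} < 1$ and $9 C_\textbf{B}\delta < 1$, the factor in parentheses is strictly less than $3$, hence $\norm{\Phi(e)}_E \leq 3\delta$ and $\Phi(\mathcal{B}) \subset \mathcal{B}$.

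Next I would show that $\Phi$ is a contraction on $\mathcal{B}$. For $e, f \in \mathcal{B}$, the linearity of $\textbf{A}$ and the bilinearity of $\textbf{B}$ yield the telescoping identity $\textbf{B}(e,e) - \textbf{B}(f,f) = \textbf{B}(e, e-f) + \textbf{B}(e-f, f)$, so that
\[
\Phi(e) - \Phi(f) = \textbf{A}(e-f) + \textbf{B}(e, e-f) + \textbf{B}(e-f, f).
\]
Applying \eqref{Cont-Picard} together with $\norm{e}_E, \norm{f}_E \leq 3\delta$ then gives
\[
\norm{\Phi(e) - \Phi(f)}_E \leq \big( C_\textbf{A} + 6 C_\textbf{B}\delta \big)\,\norm{e-f}_E .
\]
By the third condition in \eqref{Conditions-Picard}, the constant $\kappa := C_\textbf{A} + 6 C_\textbf{B}\delta$ satisfies $\kappa < 1$, so $\Phi$ is a $\kappa$-contraction of $\mathcal{B}$ into itself.

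With these two properties established, the Banach fixed-point theorem applied to $\Phi : \mathcal{B} \to \mathcal{B}$ furnishes a unique fixed point $e \in \mathcal{B}$, that is, a unique solution of \eqref{Picard} with $\norm{e}_E \leq 3\delta$. Equivalently, one may avoid invoking the abstract theorem and instead run the Picard iteration $e_{m+1} = \Phi(e_m)$ starting from the given $e_0$: the self-mapping property keeps every iterate in $\mathcal{B}$, while the contraction estimate gives $\norm{e_{m+1} - e_m}_E \leq \kappa^{m}\,\norm{e_1 - e_0}_E$, so $(e_m)$ is a Cauchy sequence whose limit is the desired fixed point. Since no compactness or regularity issue enters, there is no genuine analytic obstacle here; the only point demanding care is the elementary bookkeeping that aligns the self-mapping inequality with the first two smallness conditions and the contraction factor with the third, which is precisely the reason the numerical constants $3$, $9$ and $6$ appear in \eqref{Conditions-Picard}.
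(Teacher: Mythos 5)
Your proof is correct and complete: the self-mapping bound $\norm{\Phi(e)}_E \leq \delta\big(1+3C_\textbf{A}+9C_\textbf{B}\delta\big) < 3\delta$ and the contraction factor $C_\textbf{A}+6C_\textbf{B}\delta<1$ follow exactly from the three conditions in \eqref{Conditions-Picard}, and the bilinear telescoping identity $\textbf{B}(e,e)-\textbf{B}(f,f)=\textbf{B}(e,e-f)+\textbf{B}(e-f,f)$ is applied correctly. The paper does not prove this lemma itself but refers to the appendix of \cite{Chamorro-Yangari}, and your Banach fixed-point argument on the closed ball of radius $3\delta$ is precisely the standard argument that reference employs, so your proposal matches the intended proof.
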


Within this framework, for a time $0< T <+\infty$ that we will set later, we consider the Banach space $E= \ds{\prod_{i=1}^{n}\mathcal{C}\big([0,T], H^{s}(\R^d)\big)}$,  endowed with its natural norm $\|(u_{\alpha_1}, \cdots, u_{\alpha_n})\|_E=\ds \sum_{i=1}^{n}\|  u_{\alpha_i}\|_{L^{\infty}_{t} H^{s}_{x}}$. 

\medskip

Let us note that, for all $i = 1, \cdots, n$, the terms involving the initial data $e_0$ are directly estimated as $\left\| h_{\alpha_i}(t,\cdot)\ast u_{0,\alpha_i} \right\|_{L^{\infty}_{t} H^{s}_{x}} \leq C\, \| u_{0,\alpha_i} \|_{H^{s}}$, which yields $\norm{e_0}_E  \leq  C\, \ds \sum_{i=1}^{n} \| u_{0,\alpha_i} \|_{H^{s}}$. 

\medskip

We can now continue verifying the controls in \eqref{Cont-Picard}. Thus,  for $0< t < T$ fixed, the linear terms $A_i$ can be estimated as follows
\begin{align*}
	\left\| A_i\big(u_{\alpha_1}, \cdots, u_{\alpha_n}\big) \right\|_{H^{s}} &= \left\| \sum_{j=1}^{n} \int_{0}^{t} h_{\alpha_i}(t-\tau,\cdot)\ast L_{i,j}(u_{\alpha_j})(\tau,\cdot)d\tau \right\|_{H^{s}}\\ &\leq \sum_{j=1}^{n} \int_{0}^{t} \| h_{\alpha_i}(t-\tau,\cdot) \|_{L^1} \, \| L_{i,j}(u_{\alpha_j})(\tau,\cdot) \|_{H^{s}} d \tau, 
\end{align*}
where, according to \cite[Lemma $2.2$]{Yu-Zhai}, the convolution kernel verifies $\ds{ \| h_{\alpha_i}(t-\tau,\cdot) \|_{L^1} \leq C}$. In addition, for $i,j = 1, \cdots, n$, as the operator $L_{i,j}(\cdot)$ is defined by the symbol $\widehat{\ell_{i,j}}(\xi)$, which is bounded for any $\xi \neq 0$, there exists a constant $c_{i,j} > 0$ such that $\|L_{i,j}(u)\|_{H^s} \leq c_{i,j}\|u\|_{H^s}$.  Thus, we have
\begin{equation*}
\begin{split}
	\sum_{j=1}^{n} \int_{0}^{t} \| h_{\alpha_i}(t-\tau,\cdot) \|_{L^1} \, \| L_{i,j}(u_{\alpha_j})(\tau,\cdot) \|_{H^{s}} d \tau \leq &\,  C \left( \ds{\max_{i,j=1, \ldots, n} c_{i,j}} \right) \left( \int_{0}^{t} d\tau \right) \sum_{j=1}^{n} \| u_{\alpha_j} \|_{L^{\infty}_{t} H^{s}_{x}} \\
	 \leq  &\, C\, T\,  \| (u_{\alpha_1}, \cdots, u_{\alpha_n}) \|_{E}.
	\end{split}
\end{equation*}

On the other hand, for $i,j,k = 1, \cdots, n$, as the operator $Q_{i,j,k}(\cdot)$ is defined by the symbol $\widehat{q}_{i,j,k}(\xi)$, which is a homogeneous function of order 1, for any $\xi \neq 0$  there exists a constant $c_{i,j,k}>0$  such that $|\widehat{q}_{i,j,k}(\xi)|\leq c_{i,j,k}|\xi|$. Denoting by $c= \ds \max_{i,j,k=1,\cdots,n}c_{i,j,k}$,   each bilinear term $B_i$ is estimated as follows
\begin{align*}
	&\left\| B_i\big((u_{\alpha_1}, \cdots, u_{\alpha_n}), (v_{\alpha_1}, \cdots, v_{\alpha_n})\big) \right\|_{H^{s}}\\ \leq  & \sum_{j=1}^{n}\sum_{k=1}^{n} \int_{0}^{t} \left\|  h_{\alpha_i}(t-\tau,\cdot)\ast Q_{i,j,k}(u_{\alpha_j} v_{\alpha_k})(\tau,\cdot) d\tau \right\|_{H^{s}}\\
	\leq & c\,  \sum_{j=1}^{n}\sum_{k=1}^{n} \int_{0}^{t} \| \vec{\nabla} h_{\alpha_i}(t-\tau,\cdot) \|_{L^1} \, \left\| u_{\alpha_j} u_{\alpha_k}  (\tau, \cdot) \right\|_{H^{s}} d \tau. 
\end{align*}
 Recall that from \cite[Lemma $2.2$]{Yu-Zhai}, we have $\ds{ \| \vec{\nabla} h_{\alpha_i}(t-\tau,\cdot) \|_{L^1} \leq C (t-\tau)^{-\frac{1}{\alpha_i}}}$. Moreover, as $s>d/2$, the product laws in Sobolev spaces $H^s(\R^d)$  yield 
\[  \left\| u_{\alpha_j} u_{\alpha_k}  (\tau, \cdot) \right\|_{H^{s}} \leq c_s \| u_{\alpha_j}(\tau,\cdot)\|_{H^s}\, \| u_{\alpha_k}(\tau,\cdot)\|_{H^s}, \]
with a constant $c_s>0$.   Thus, for $\alpha_i>1$ we obtain
\begin{align*}
	%&\left\| B_i\big(\ (u_{\alpha_i}, \cdots, u_{\alpha_n}), (v_{\alpha_i}, \cdots, v_{\alpha_n})\ \big) \right\|_{H^{s}}\\
	& c\,  \sum_{j=1}^{n}\sum_{k=1}^{n} \int_{0}^{t} \| \vec{\nabla} h_{\alpha_i}(t-\tau,\cdot) \|_{L^1} \, \left\| u_{\alpha_j} u_{\alpha_k}  (\tau, \cdot) \right\|_{H^{s}} d \tau\\
	\leq &\,  c\,c_s \ \left( \int_{0}^{t} (t-\tau)^{-\frac{1}{\alpha_i}} \, d\tau\right)  \sum_{j=1}^{n} \| u_{\alpha_j} \|_{L^{\infty}_{t} H^{s}_{x}}\,  \sum_{k=1}^{n} \| v_{\alpha_k} \|_{L^{\infty}_{t} H^{s}_{x}}  \\
	\leq &\,  C\ \frac{T^{1-\frac{1}{\alpha_i}}}{1-\frac{1}{\alpha_i}}\,  \|(u_{\alpha_1}, \cdots, u_{\alpha_n})  \|_{E}\, \| (v_{\alpha_1}, \cdots, v_{\alpha_n}) \|_{E}.
\end{align*}

In consequence,  the following estimates hold:
\begin{align*}
	\norm{\textbf{A} \big(u_{\alpha_1}, \cdots, u_{\alpha_n}\big)}_E \leq C nT\, \| (u_{\alpha_i}, \cdots, u_{\alpha_n})\|_{E},
\end{align*}
and
\begin{align*}
	\norm{\textbf{B} \big((u_{\alpha_1}, \cdots, u_{\alpha_n}), (v_{\alpha_1}, \cdots, v_{\alpha_n})\big)}_E \leq C \left(\sum_{i = 1}^{n} \frac{T^{1-\frac{1}{\alpha_i}}}{1-\frac{1}{\alpha_i}}\right)   \|(u_{\alpha_1}, \cdots, u_{\alpha_n})  \|_{E}\, \| (v_{\alpha_1}, \cdots, v_{\alpha_n}) \|_{E}.
\end{align*}

\medskip

Therefore, within the setting of Lemma \ref{Lemma_Picard}, we consider $$\delta := \ds C\,\sum_{j=1}^{n} \| u_{0,\alpha_j} \|_{H^{s}}, \qquad \ds C_\textbf{A} := Cn 
T, \qquad \text{and} \qquad C_\textbf{B} := C \left(\sum_{i = 1}^{n} \frac{T^{1-\frac{1}{\alpha_i}}}{1-\frac{1}{\alpha_i}}\right).$$ Hence, we must verify the whole set of conditions in (\ref{Conditions-Picard}). Let us note that the first condition is verified as long as 
\begin{equation*}
	0< 3 Cn T < 1.
\end{equation*} 
Once we have this constraint, observe that the second and the third condition in (\ref{Conditions-Picard}) simultaneously hold as long as 
\[ 0< 9C \left(\sum_{i = 1}^{n} \frac{T^{1-\frac{1}{\alpha_i}}}{1-\frac{1}{\alpha_i}}\right) \left( \ds \sum_{j=1}^{n} \| u_{0,\alpha_j} \|_{H^{s}}\right) <1,\]
which is ultimately verified by assuming the constrain
\begin{equation*}
	9 C \left( \frac{T^{1-\frac{1}{\alpha_i}}}{1-\frac{1}{\alpha_i}}\right)  \| u_{0,\alpha_j} \|_{H^{s}}< \frac{1}{n^2}, \qquad \text{for all } i,j= 1, \cdots, n.
\end{equation*}
Finally, we set the existence time $T=T_{\alpha}$ as in expression (\ref{T-alpha}), and Proposition \ref{Prop:LWP} is proven. 
%%%%%%%%%%%%%%%%%%%%%%
\section{Proof of Theorem \ref{Th-Main}}\label{Sec:Proof-Main-Th}
\subsection{Convergence of kernels} As mentioned in the introduction, our main estimate (\ref{Conv-Rate-Sol}) strongly depends on the convergence of the kernels $ h_{\alpha_i}(t,\cdot)\to h_2(t,\cdot)$ as $\alpha_i \to 2$. Thus, one of the \emph{key tools} of this paper is the following  result:
\begin{Proposition}\label{Key-Proposition}  Fix $0<\delta < \frac{1}{6}$, and let  $\frac{1}{4}<\eta<\frac{1}{2}$  be the  quantity defined in expression (\ref{Quantities}). There exists a constant $C=C(\delta)>0$, such that, for   every time $0<T<+\infty$,   the following estimate holds: 
		\begin{equation}\label{Key-Estimate-1}
	\sup_{0\leq  t \leq T} t^\eta  \left\Vert \widehat{h}_{\alpha_i}(t,\cdot)-\widehat{h}_2(t,\cdot) \right\Vert_{L^\infty} \leq  C \left(1+T^{\eta+1}\right)|2- \alpha_i|,
	\end{equation} 
	for any $\alpha_i$ satisfying $2-\delta<\alpha_i < 2+\delta$, with $i=1,\cdots, n$. Additionally, define the quantity $\ds{\kappa:= \frac{3+4\delta}{4-2\delta}}$, which verifies $\frac{3}{4}<\kappa<1$. Then, it holds:
		\begin{equation}\label{Key-Estimate-2}
	\sup_{0\leq  t \leq T} t^\kappa  \left\Vert |\xi| \left( \widehat{h}_{\alpha_i}(t,\cdot)-\widehat{h}_2(t,\cdot) \right) \right\Vert_{L^\infty} \leq  C \left(1+T^{\kappa+1}\right)|2- \alpha_i|, 
	\end{equation}
	for any $2-\delta<\alpha_i < 2+\delta$. 
\end{Proposition}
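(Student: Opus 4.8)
The plan is to reduce the whole proposition to an elementary but sharply calibrated pointwise estimate on the Fourier side. Since $h_{\alpha_i}$ is the fundamental solution of $\partial_t h_{\alpha_i} + (-\Delta)^{\alpha_i/2}h_{\alpha_i}=0$, the definition (\ref{Lap-Frac}) gives explicitly
\begin{equation*}
\widehat{h}_{\alpha_i}(t,\xi)=e^{-t\,C_{\alpha_i,d}\,|\xi|^{\alpha_i}},\qquad \widehat{h}_2(t,\xi)=e^{-t\,C_{2,d}\,|\xi|^2}.
\end{equation*}
Writing $r=|\xi|$, everything then reduces to bounding the scalar quantity $D(t,\xi):=|\widehat{h}_{\alpha_i}(t,\xi)-\widehat{h}_2(t,\xi)|$, together with its weighted versions $t^\eta D$ and $t^\kappa\, r\, D$, uniformly in $t\in[0,T]$ and $\xi\in\R^d$.

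The first step is to isolate the two sources of discrepancy by two successive mean value theorems. Using $|e^{-a}-e^{-b}|\le e^{-\min(a,b)}|a-b|$ on the exponentials, and then $|r^{\alpha_i}-r^2|=|\alpha_i-2|\,r^{\beta^*}|\ln r|$ for some $\beta^*$ between $\alpha_i$ and $2$ (since $\partial_\beta r^\beta=r^\beta\ln r$), I would obtain the master bound
\begin{equation*}
D(t,\xi)\ \lesssim\ t\,e^{-\min(C_{\alpha_i,d}r^{\alpha_i},\,C_{2,d}r^2)\,t}\,|\alpha_i-2|\,r^{\beta^*}|\ln r|.
\end{equation*}
The constant $C_{\alpha_i,d}$, being smooth in $\alpha_i$ near $2$, is handled by splitting $C_{\alpha_i,d}r^{\alpha_i}-C_{2,d}r^2=C_{\alpha_i,d}(r^{\alpha_i}-r^2)+(C_{\alpha_i,d}-C_{2,d})r^2$ and using $|C_{\alpha_i,d}-C_{2,d}|\lesssim|\alpha_i-2|$; the second, lower-order term is treated exactly as the first, so I will focus on the displayed dominant contribution.

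Next I would split the supremum into the regions $r\le 1$ and $r>1$. For $r\le1$ the exponential is bounded by $1$ and $r^{\beta^*}|\ln r|$ is bounded by a constant $C(\delta)$ (as $\beta^*\ge 2-\delta>0$), so this region contributes $t^{1+\eta}C(\delta)|\alpha_i-2|\le T^{1+\eta}C(\delta)|\alpha_i-2|$, producing the $T^{\eta+1}$ term. For $r>1$ I use $\min(C_{\alpha_i,d}r^{\alpha_i},C_{2,d}r^2)\gtrsim r^{2-\delta}$ and $r^{\beta^*}\le r^{2+\delta}$, reducing $t^\eta D$ to controlling $t^{1+\eta}e^{-c\,t\,r^{2-\delta}}\,r^{2+\delta}\ln r$. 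The decisive move is the scaling substitution $u=t\,r^{2-\delta}$: after it, $t^{1+\eta}e^{-c\,t\,r^{2-\delta}}=u^{1+\eta}e^{-cu}\,r^{-(2-\delta)(1+\eta)}$, and since $u^{1+\eta}e^{-cu}$ is bounded, what remains is $r^{\,2+\delta-(2-\delta)(1+\eta)}\ln r=r^{\,2\delta-(2-\delta)\eta}\ln r$.

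Here lies the crux, and the reason the precise value of $\eta$ in (\ref{Quantities}) matters: with $\eta=\frac{1+4\delta}{4-2\delta}$ one computes $2\delta-(2-\delta)\eta=2\delta-\frac{1+4\delta}{2}=-\tfrac12$, so the surviving factor is exactly $r^{-1/2}\ln r$, which is bounded on $r\ge1$. This yields a clean $C(\delta)|\alpha_i-2|$ with no power of $T$, completing (\ref{Key-Estimate-1}). Estimate (\ref{Key-Estimate-2}) follows identically: the extra factor $|\xi|=r$ raises the surviving power to $r^{\,1+2\delta-(2-\delta)\kappa}\ln r$, and $\kappa=\frac{3+4\delta}{4-2\delta}$ again forces this exponent to be exactly $-\tfrac12$. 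The main obstacle is therefore not any single estimate but getting this bookkeeping right: one must choose the interpolation exponent $2-\delta$ in the decay and the time weights $\eta,\kappa$ so that, after the scaling $u=t\,r^{2-\delta}$, the remaining power of $r$ is \emph{strictly} negative uniformly over the whole window $2-\delta<\alpha_i<2+\delta$. The logarithmic factor coming from $\partial_\beta r^\beta$ is precisely what forces a strictly negative (rather than merely nonpositive) exponent, which is why $\eta$ and $\kappa$ land at the specific values stated in and below (\ref{Quantities}).
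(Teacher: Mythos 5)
Your proposal is correct and takes essentially the same route as the paper's proof: a mean value theorem in the exponent $\alpha$ applied to $e^{-t|\xi|^{\alpha}}$, a low/high-frequency split at $|\xi|=1$, and a scaling substitution in the high-frequency region whose surviving power of $t$ is cancelled by the precise choices of $\eta$ and $\kappa$. The only cosmetic differences are that you track the constant $C_{\alpha_i,d}$ explicitly (the paper works with the normalized symbol $e^{-t|\xi|^{\alpha}}$) and keep the factor $\ln r$ until after rescaling, landing on the bounded quantity $r^{-1/2}\ln r$, whereas the paper first bounds $\ln|\xi|\le|\xi|^{1/2}$ and then rescales --- both computations produce the identical exponents $\eta=\frac{1+4\delta}{4-2\delta}$ and $\kappa=\frac{3+4\delta}{4-2\delta}$.
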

\begin{proof} To simplify our notation, we will omit the subscript in $\alpha_i$ and simply write $\alpha$.   We start by proving the estimate (\ref{Key-Estimate-1}). For fixed  $0 \leq t \leq T$ and $\xi \neq 0$,  we define the following function depending on the variable $\alpha$:
\[f_{t,\xi}(\alpha)=e^{-t |\xi|^{\alpha}}, \quad 2-\delta < \alpha < 2+\delta. \]
Computing its derivative we obtain
\[ \frac{d}{d\alpha} f_{t,\xi}(\alpha)= -t e^{-t |\xi|^{\alpha}}|\xi|^{\alpha} \ln |\xi|.\]
Then,  applying the mean value theorem, we write
\begin{equation*}
| f_{t,\xi}(\alpha) - f_{t,\xi}(2)| \leq \left\| \frac{d}{d\alpha} f_{t,\xi}\right\|_{L^\infty_{\alpha}(2-\delta,2+\delta)}\,|2-\alpha|.
\end{equation*}
We thus get
\begin{equation}\label{Estim-01}
\left\| \widehat{h}_{\alpha}(t,\cdot)-\widehat{h}_2(t,\cdot)  \right\|_{L^\infty} \leq \left\| \left\| \frac{d}{d\alpha} f_{t,\xi}\right\|_{L^\infty_\alpha(2-\delta,2+\delta)} \right\|_{L^\infty_\xi (\Rt)}\, |2-\alpha|=:(A)\, |2-\alpha|,
\end{equation}
where we must estimate the term $(A)$. To do this, we write
\[ (A)\leq   \left\| \left\| \frac{d}{d\alpha} f_{t,\xi}\right\|_{L^\infty_\alpha(2-\delta,2+\delta)} \right\|_{L^\infty_\xi (|\xi|\leq 1)} + \left\| \left\| \frac{d}{d\alpha} f_{t,\xi}\right\|_{L^\infty_\alpha(2-\delta,2+\delta)}\, \right\|_{L^\infty_\xi (|\xi|>1)}=:(A_1)+(A_2).  \]
To estimate the term $(A_1)$, since  $|\xi|\leq 1$ and   $1<2-\delta <\alpha$, it follows that $|\xi|^\alpha \leq |\xi|$  and $\ds{\lim_{|\xi|\to 0^{+}}|\xi| \ln|\xi|=0}$. We then  obtain
\begin{equation*}
(A_1) \leq \,  \sup_{|\xi|\leq 1} \left( \sup_{1<\alpha < 2+\delta} \left| t e^{-t |\xi|^\alpha}|\xi|^\alpha \ln |\xi| \right|  \right) \leq t\sup_{|\xi|\leq 1} \left( |\xi| \ln|\xi|  \right)\leq t\,C. 
\end{equation*}
To estimate the term $(A_2)$, since $2-\delta <\alpha<2+\delta$ and $|\xi|>1$, we obtain $|\xi|^{2-\delta}\leq |\xi|^\alpha \leq |\xi|^{2+\delta}$. In addition, always since $|\xi|>1$, it follows that   $\ln|\xi| \leq |\xi|^{\frac{1}{2}}$.  We  then  write
\begin{equation*}
\begin{split}
(A_2) = &\,   \sup_{|\xi|>1}\left( \sup_{2-\delta<\alpha<2+\delta}\left| t e^{-t |\xi|^\alpha}|\xi|^\alpha \ln |\xi| \right| \right)  \leq  t \,  \sup_{|\xi|>1}\left( e^{-t |\xi|^{2-\delta}}|\xi|^{2+\delta}|\xi|^{\frac{1}2} \right)\\
\leq &\, t  \, \sup_{|\xi|>1} \left( e^{-\left| t^{\frac{1}{2-\delta}}\xi \right|^{2-\delta}} |\xi|^{\frac{5+2\delta}{2}} \right)  \leq  t^{1-\frac{5+2\delta}{4-2\delta}} \sup_{\xi \in \R^d} \left( e^{-\left| t^{\frac{1}{2-\delta}}\xi \right|^{2-\delta}} \left|t^{\frac{1}{2-\delta}}\xi\right|^{\frac{5+2\delta}{2}} \right).
\end{split}
\end{equation*}
\begin{Remarque}\label{Rmk:weight-time}
Note that for any $\delta >0$, it holds that $1-\frac{5+2\delta}{4-2\delta}<0$. Consequently, when $t\to 0^{+}$, we lose control over the term $t^{1-\frac{5+2\delta}{4-2\delta}}$.  In this context, the quantity $\eta$, defined in (\ref{Quantities}), is expressly  introduced to obtain $\eta + 1 - \frac{5+2\delta}{4-2\delta}=0$.   
\end{Remarque}
Then, we get 
\[ t^\eta (A_2)\leq C.  \] 

Returning to  (\ref{Estim-01})  and combining the  estimates for the terms $(A_1)$ and $(A_2)$, we can write 
\begin{equation*}
t^\eta  \left\| \widehat{h}_{\alpha}(t,\cdot)-\widehat{h}_2(t,\cdot)  \right\|_{L^\infty} \leq t^\eta (A) |2-\alpha| \leq  \big( t^\eta (A_1)+t^\eta (A_2)\big) |2-\alpha| \leq C(t^{\eta+1}+1)|2-\alpha|,
\end{equation*}
from which the desired estimate (\ref{Key-Estimate-1}) follows. 

\medskip

The second estimate (\ref{Key-Estimate-2})  follows from very similar  computations, this time by considering the  function 
\[ f_{t,\xi}(\alpha)= |\xi|e^{-t|\xi|^\alpha}, \quad 2-\delta < \alpha < 2+\delta. \]
In particular, the term $(A_2)$ can be expressed as 
\[ (A_2) \leq t^{\kappa+1} \, \sup_{|\xi|>1} \left( e^{-\left| t^{\frac{1}{2-\delta}}\xi \right|^{2-\delta}} |\xi|^{\frac{7+2\delta}{2}} \right)  \leq  t^{\kappa+1-\frac{7+2\delta}{4-2\delta}} \sup_{\xi \in \R^d} \left( e^{-\left| t^{\frac{1}{2-\delta}}\xi \right|^{2-\delta}} |t^{\frac{1}{2-\delta}}\xi|^{\frac{7+2\delta}{2}} \right), \]
where  the quantity $\kappa$  defined above    implies that  $\ds{\kappa+1-\frac{7+2\delta}{4-2\delta}=0}$.  Thus,  Proposition \ref{Key-Proposition} is proven.  
\end{proof} 

\subsection{Proof of Theorem \ref{Th-Main}} 
Recall that for the solution $(u_{\alpha_1}, \ldots, u_{\alpha_n})$ to the system (\ref{System-Integral}), obtained in Proposition \ref{Prop:LWP}, each function $u_{\alpha_i}$ exists over the time interval $[0, T_{\alpha }]$. For clarity, we will divide the proof of the estimate (\ref{Conv-Rate-Sol}) into two main steps. In the first step, we will prove this estimate over an interval $[0, T]$, where the time $T$ is suitably small. Then, in the second step, we will show that this estimate extends to the entire time interval $[0, T_2]$, where $T_2$ is defined in (\ref{T_2}) and denotes the existence time of the limit solution $(u_{2,1}, \ldots, u_{2,n})$ in the classical diffusion case.

\subsubsection{Step 1}   From now on, we assume that the fractional powers $\alpha_i$ are sufficiently close to the limiting value of $2$ in the sense described by condition (\ref{Condition-alpha}). This assumption allows to prove the following technical result:
\begin{Lemme} Let $T_{\alpha}>0$ denote the time obtained in expression (\ref{T-alpha}). Assume (\ref{Convergence-Data}), and  for a parameter $0 < \delta < \frac{1}{6}$, assume that condition (\ref{Condition-alpha}) holds. Then, there exists a constant $T_0 > 0$, depending  on $\delta$, such that
	\begin{equation}\label{Control-Time}
	T_0 \leq T_{\alpha}, \quad \text{	for all $2 - \delta < \alpha_i < 2 + \delta$}. 
	\end{equation}
\end{Lemme}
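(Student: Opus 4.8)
The plan is to read off a uniform lower bound directly from the explicit formula (\ref{T-alpha}) for $T_\alpha$. Observe that $T_\alpha$ is one half of a minimum of two quantities: the first, $\frac{1}{3nC}$, does not depend on $\alpha$ at all, so it is harmless. Hence the whole task reduces to bounding the second quantity,
\[
\min_{i,j=1,\dots,n}\left(\frac{1-\frac{1}{\alpha_i}}{9n^2 C\,\|u_{0,\alpha_j}\|_{H^s}}\right)^{\frac{\alpha_i}{\alpha_i-1}},
\]
from below by a positive constant depending only on $\delta$ (and on the fixed data of the problem), uniformly for all $\alpha_i\in(2-\delta,2+\delta)$. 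I would do this by controlling separately each of the three $\alpha$-dependent ingredients: the factor $1-\frac1{\alpha_i}$, the norms $\|u_{0,\alpha_j}\|_{H^s}$, and the exponent $\frac{\alpha_i}{\alpha_i-1}$.

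First I would pin down the three ranges. Since $0<\delta<\frac16$ forces $2-\delta>\frac{11}{6}>1$, the increasing map $\alpha\mapsto 1-\frac1\alpha$ gives $1-\frac1{\alpha_i}\ge 1-\frac1{2-\delta}=\frac{1-\delta}{2-\delta}>0$, a positive lower bound depending only on $\delta$. For the norms, I would use the triangle inequality together with the prescribed convergence (\ref{Convergence-Data}): since $|2-\alpha_j|<\delta<1$ and $\beta_j>0$ give $|2-\alpha_j|^{\beta_j}\le 1$, we obtain the uniform upper bound $\|u_{0,\alpha_j}\|_{H^s}\le \|u_{0,2,j}\|_{H^s}+{\bf c}\le M$, where $M:=\max_{j}\|u_{0,2,j}\|_{H^s}+{\bf c}$. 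For the exponent, writing $\frac{\alpha_i}{\alpha_i-1}=1+\frac1{\alpha_i-1}$ and using $\alpha_i-1\in(1-\delta,1+\delta)$, I get $\frac{\alpha_i}{\alpha_i-1}\in[p_-,p_+]$ with $p_-:=1+\frac1{1+\delta}$ and $p_+:=1+\frac1{1-\delta}$, both finite and positive.

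Then I would combine these. Setting $a:=\frac{1-1/\alpha_i}{9n^2C\,\|u_{0,\alpha_j}\|_{H^s}}$ and $p:=\frac{\alpha_i}{\alpha_i-1}$, the first two bounds give $a\ge a_-:=\frac{(1-\delta)/(2-\delta)}{9n^2 C\,M}>0$, while the third gives $p\in[p_-,p_+]$. Since $x\mapsto x^p$ is increasing for $p>0$, $a^p\ge a_-^{\,p}$; and since $p\mapsto a_-^{\,p}$ is monotone, its minimum over $[p_-,p_+]$ is attained at an endpoint, so $a^p\ge \min\!\big(a_-^{\,p_-},a_-^{\,p_+}\big)>0$. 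This bound is uniform in $i,j$, whence
\[
T_\alpha\ \ge\ \frac12\,\min\!\left[\frac{1}{3nC},\ \min\!\big(a_-^{\,p_-},a_-^{\,p_+}\big)\right]=:T_0>0,
\]
which is the claim.

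The computations are elementary monotonicity arguments, so the only genuinely delicate points — and where I would be careful — are two. First, the exponent $\frac{\alpha_i}{\alpha_i-1}$ is unbounded as $\alpha_i\to1^+$, so it is essential to exploit that $\delta<\frac16$ keeps $\alpha_i$ bounded away from $1$ in order to obtain the finite exponent bound $p_+$. Second, the direction of monotonicity of $p\mapsto a_-^{\,p}$ depends on whether $a_-\gtrless 1$, so one must keep both endpoint values $a_-^{\,p_-}$ and $a_-^{\,p_+}$ and take their minimum rather than guessing which is smaller. Neither is a serious obstacle, but overlooking either would break the uniformity that the statement requires.
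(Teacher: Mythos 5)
Your proposal is correct and follows essentially the same route as the paper's proof: a lower bound on $1-\frac{1}{\alpha_i}$ via $\alpha_i>2-\delta$, a uniform bound on $\|u_{0,\alpha_j}\|_{H^s}$ from the triangle inequality with (\ref{Convergence-Data}), a bracketing of the exponent $\frac{\alpha_i}{\alpha_i-1}$, and the minimum over the two endpoint powers to handle the base-versus-$1$ dichotomy (your $a_-$ plays the role of the paper's $\Phi_j$, with only the trivial difference that you take a uniform constant $M$ over $j$ before minimizing, and your exponent bracket $\left[\frac{2+\delta}{1+\delta},\frac{2-\delta}{1-\delta}\right]$ is the sharp version of the paper's slightly looser one).
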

\begin{proof}
By estimate (\ref{Convergence-Data}) and the fact that $|2-\alpha_j|<\delta$, we write $\ds{\big| \| u_{0,\alpha_j} \|_{H^s} -  \| u_{0,2,j}\|_{H^s} \big| < \delta^{\beta_j} \leq \delta^\beta}$, with $\ds{\beta:=\min_{i=1,\cdots,n}\beta_i}$. Then, we obtain 
\[  \| u_{0,\alpha_j}\|_{H^s}< \| u_{2,0,j}\|_{H^s}+\delta^\beta. \]
 Returning to expression (\ref{T-alpha}), from this last inequality, we can write
\[ \left( \frac{1-\frac{1}{\alpha_i} }{9n^2  C   \| u_{0,2,j} \|_{H^{s}}+ \delta^\beta }\right) ^{\frac{\alpha_i}{\alpha_i-1}} \leq  \left( \frac{1-\frac{1}{\alpha_i} }{9n^2 C  \| u_{0,\alpha_j} \|_{H^{s}}  }\right) ^{\frac{\alpha_i}{\alpha_i-1}}. \]
In addition, as $2-\delta < \alpha_i$, we have $1- \frac{1}{2-\delta}<1-\frac{1}{\alpha_i}$, and we obtain
\[ \left( \frac{1-\frac{1}{2-\delta} }{9n^2  C   \| u_{0,2,j} \|_{H^{s}}+ \delta^\beta }\right) ^{\frac{\alpha_i}{\alpha_i-1}} \leq \left( \frac{1-\frac{1}{\alpha_i} }{9n^2  C \ \| u_{0,\alpha_j} \|_{H^{s}}  }\right) ^{\frac{\alpha_i}{\alpha_i-1}}. \]

For simplicity, we introduce  the quantity  
\[ \Phi_j := \frac{1-\frac{1}{2-\delta} }{9n^2  C  \| u_{0,2,j} \|_{H^{s}}+ \delta^\beta}, \]
and we write
\[ \Phi_j^{\frac{\alpha_i}{\alpha_i-1}} \leq \left( \frac{1-\frac{1}{\alpha_i} }{9n^2C \| u_{0,\alpha_j} \|_{H^{s}} }\right) ^{\frac{\alpha_i}{\alpha_i-1}}, \]
where we still need to study the term $\ds{\Phi_j^{\frac{\alpha_i}{\alpha_i-1}}}$. As $2-\delta < \alpha_i < 2+\delta$, it follows that $\ds{\frac{2-\delta}{1+\delta}< \frac{\alpha_i}{\alpha_i-1}<\frac{2+\delta}{1-\delta}}$.  Then, if the quantity $\Phi_j$ introduced above satisfies   $\Phi_j<1$, then we obtain $\ds{\Phi_j^{\frac{2+\delta}{1-\delta}}\leq \Phi_j^{\frac{\alpha_i}{\alpha_i-1}}}$, meanwhile if $\Phi_j\geq 1$ we obtain 
$\ds{\Phi_j^{\frac{2-\delta}{1+\delta}} \leq \Phi_j^{\frac{\alpha_i}{\alpha_i-1}}}$.  Consequently, we can write
\[ \min\left( \Phi_j^{\frac{2+\delta}{1-\delta}}, \, \Phi_j^{\frac{2-\delta}{1+\delta}} \right) \leq \Phi_j^{\frac{\alpha_i}{\alpha_i-1}}. \]
 In this way, using again the expression (\ref{T-alpha}), we finally obtain the lower bound
\[ T_0:= \frac{1}{2} \left[ \frac{1}{3nC}, \  \min_{j=1,\cdots,n} \left( \min\left( \Phi_j^{\frac{2+\delta}{1-\delta}}, \, \Phi_j^{\frac{2-\delta}{1+\delta}} \right) \right) \right] \leq T_{\alpha}. \]
\end{proof}	
We thus obtain that each component of the solution $(u_{\alpha_1}, \ldots, u_{\alpha_n})$ is at least defined over the time interval $[0, T_0]$, where the time $T_0$ is independent of $\alpha_i$.  Next, we fix a time $T < \min(1, T_0)$, which will be chosen sufficiently small later, and prove that the estimate (\ref{Conv-Rate-Sol}) holds over the interval $[0, T]$.  
 
\medskip

 Using the integral equations in the system (\ref{System-Integral}),  and with the quantity $\eta$ defined in (\ref{Quantities}),  we write for $i=1,\cdots,n$, and for $0\leq t \leq T$:
\begin{equation}\label{Ineq1}
\begin{split}
t^{\eta}\| u_{\alpha_i}(t,\cdot)-u_{2,i}(t,\cdot)\|_{H^s}\leq &\,\, \underbrace{t^{\eta}\left\| h_{\alpha_i}(t,\cdot)\ast u_{0,\alpha_i} - h_2(t,\cdot)\ast u_{0,2,i} \right\|_{H^s}}_{I_1} \\
&\, \,+   \sum_{j=1}^{n}\sum_{k=1}^{n}\,t^{\eta}\, \left\|  \int_{0}^{t} h_{\alpha_i} (t-\tau,\cdot) \ast Q_{i,j,k}(u_{\alpha_j} u_{\alpha_k})(\tau,\cdot) d \tau  \right. \\
&\,\, \underbrace{\qquad  \left.  -  \int_{0}^{t} h_2(t-\tau,\cdot)\ast Q_{i,j,k}(u_{2,j} u_{2,k})(\tau,\cdot)d \tau   \right\|_{H^s}}_{I_2} \\
&\,\, + \sum_{j=1}^{n}\, t^{\eta}\, \left\|  \int_{0}^{t} h_{\alpha_i}(t-\tau,\cdot)\ast L_{i,j}(u_{\alpha_j})(\tau,\cdot)d \tau \right. \\  
&\underbrace{\qquad  \left.-   \int_{0}^{t} h_2(t-\tau,\cdot) \ast L_{i,j}(u_{2,j})(\tau,\cdot)d \tau \right\|_{H^s}}_{I_3}.
\end{split}
\end{equation} 
We must estimate the terms $I_1, I_2, I_3$.  For the sake of clarity, we will consider each term separately.

\medskip

\emph{Term $I_1$}.  We write
\begin{equation*}
I_1 \leq \,  t^\eta \left\| \left( h_{\alpha_i}-h_2\right)(t,\cdot)\ast u_{0,\alpha_i}\right\|_{H^s}+ t^\eta \left\| h_2(t,\cdot)\ast \left(u_{0,\alpha_i} - u_{0,2,i} \right)\right\|_{H^s} =:\, I_{1,1}+I_{1,2}.
\end{equation*}
To control the first term $I_{1,1}$,   recall that  by our assumption (\ref{Convergence-Data}), for $i=1,\cdots,n$, the family of initial data $\{ u_{0,\alpha_i} :  \,  2-\delta<\alpha_i<2+\delta, \, \alpha_i\neq 2 \}$ is bounded in $H^s(\R^d)$. Consequently,  there exists a quantity $M_{i,\delta}>0$,   depending on $\| \vu_{0,2,i}\|_{H^s}$ and $\delta$, such that $\ds{\sup_{2-\delta<\alpha_i<2+\delta}\| \vu_{0,\alpha_i}\|_{H^s}\leq M_{i,\delta}}$. In addition, to simplify our notation, we will denote $M_\delta:= \ds{\max_{i=1,\cdots,n}M_{i,\delta}}$. Then, we obtain the uniform control 
\begin{equation}\label{Control-Data}
\max_{i=1,\cdots,n}\left(\sup_{2-\delta<\alpha_i<2+\delta}\| \vu_{0,\alpha_i}\|_{H^s}\right)\leq M_{\delta}.
\end{equation}
Moreover,  applying estimate (\ref{Key-Estimate-1}), for $0\leq t \leq T$, we have
\begin{equation*}
\begin{split}
I_{1,1}  \leq &\, t^{\eta} \left\| \left( \widehat{h}_{\alpha_i}-\widehat{h}_{2} \right)(t,\cdot)\,\widehat{u}_{0,\alpha_i}\, (1+|\xi|^2)^{\frac{s}{2}} \right\|_{L^2}\\
\leq &\, \left( t^{\eta}  \left\| \widehat{h}_{\alpha_i}(t,\cdot)-\widehat{h}_{2}(t,\cdot) \right\|_{L^\infty} \right) \| u_{0,\alpha_i}\|_{H^s}\\
\leq &\, C\left(1+T^{\eta+1}\right)\,|2-\alpha_i|\, M_{\delta}.
\end{split}
\end{equation*}
Second term $I_{1,2}$ is directly controlled by estimate (\ref{Convergence-Data})  as follows:
\begin{equation*}
I_{1,2}\leq \,  t^\eta \left\| h_2(t,\cdot)\ast \left(u_{0,\alpha_i} - u_{0,2,i} \right)\right\|_{H^s} \leq {C \| u_{0,\alpha_i} - u_{0,2,i}\|_{H^s}} \leq C\,  {\bf c}  |2-\alpha_i|^{\beta_i}.
\end{equation*} 
Gathering these estimates, and using the function $F_i(\cdot)$ defined in (\ref{Function}),  we  get
\begin{equation}\label{I1}
\begin{split}
I_1 \leq &\,  {\bf c} M_\delta\, C\left(1+T^{\eta+1}\right)  \, \max\big( |2-\alpha_i|, |2-\alpha_i|^{\beta_i} \big)=:\mathfrak{C}_{1}\left(1+T^{\eta+1}\right)\,
 F_i(|2-\alpha_i|)\\
 \leq &\, \mathfrak{C}_{1}\left(1+T^{\eta+1}\right)\,
 \max_{i=1,\cdots,n}F_i(|2-\alpha_i|).
 \end{split}
\end{equation}

\emph{Term $I_2$}.  We write
\begin{equation}\label{I2-Previous}
\begin{split}
I_2 \leq & \, \sum_{j=1}^{n}\sum_{k=1}^{n}\, t^{\eta} \left\|  \int_{0}^{t}\left( h_{\alpha_i} - h_2\right)(t-\tau,\cdot) \ast Q_{i,j,k}(u_{\alpha_j} u_{\alpha_k})(\tau,\cdot)  d\tau \right\|_{H^s}\\
&\, + \sum_{j=1}^{n}\sum_{k=1}^{n} t^{\eta} \left\| \int_{0}^{t} h_2(t-\tau,\cdot)\ast  Q_{i,j,k}\big(u_{\alpha_j} u_{\alpha_k}- u_{2,j}u_{2,k}\big) (\tau,\cdot)d \tau   \right\|_{H^s}\\
=:&\, I_{2,1}+I_{2,2}. 
\end{split}
\end{equation}

To control the first term $I_{2,1}$, recall that the operator $Q_{i,j,k}(\cdot)$  is defined in expression (\ref{Term-Q}) by the symbol $\widehat{q}_{i,j,k}(\xi)$. Moreover, this symbol is a homogeneous function of order 1. Therefore, for any $\xi \neq 0$, one has $|\widehat{q}_{i,j,k}(\xi)|\leq c_{i,j,k}|\xi|$, where $c_{i,j,k}>0$ is a constant. We will denote $\ds{C_1:=\max_{i,j,k=1,\cdots,n}c_{i,j,k}}$, and  we then write
\begin{equation*}
\begin{split}
I_{2,1} \leq &\, \sum_{j=1}^{n}\sum_{k=1}^{n} t^\eta \, \int_{0}^{t} \left\| \left( \widehat{h}_{\alpha_i} - \widehat{h}_2\right)(t-\tau,\cdot) \widehat{q}_{i,j,k}(\xi) (\widehat{u_{\alpha_j} u_{\alpha_k}})(\tau,\cdot) (1+|\xi|^2)^{\frac{s}{2}} \right\|_{L^2}\, d\tau\\
\leq& \, C_1\,  \sum_{j=1}^{n}\sum_{k=1}^{n} t^\eta \, \int_{0}^{t} \left\| |\xi|\left( \widehat{h}_{\alpha_i} - \widehat{h}_2\right)(t-\tau,\cdot)  (\widehat{u_{\alpha_j} u_{\alpha_k}})(\tau,\cdot) (1+|\xi|^2)^{\frac{s}{2}} \right\|_{L^2}\, d\tau\\
\leq &\, C_1\,  \sum_{j=1}^{n}\sum_{k=1}^{n} t^\eta \, \int_{0}^{t} \left\| |\xi|\left( \widehat{h}_{\alpha_i} - \widehat{h}_2\right)(t-\tau,\cdot) \right\|_{L^\infty}\, \left\| u_{\alpha_j} u_{\alpha_k}(\tau,\cdot)  \right\|_{H^s}\, d\tau.
\end{split}
\end{equation*}
For the first term, by estimate (\ref{Key-Estimate-2}) we obtain
\begin{equation}\label{Control-1}
\left\| |\xi|\left( \widehat{h}_{\alpha_i} - \widehat{h}_2\right)(t-\tau,\cdot) \right\|_{L^\infty} \leq (t-\tau)^{-\kappa}\, C(1+T^{\kappa+1})|2-\alpha_i|.
\end{equation}
For the second term, we can prove the following:
\begin{Lemme}  There exists a constant $C_2>0$, which depends  essentially on the initial data $(u_{0,2,1},\cdots, u_{0,2,n})$ and the parameters $\delta,s$, such that  the following uniform bound holds:
	\begin{equation}\label{Control-2}
\max_{j,k=1,\cdots,n}\left(	\sup_{2-\delta<\alpha_j,\alpha_k<2+\delta} \left(\sup_{0\leq \tau \leq T} \| u_{\alpha_j} u_{\alpha_k}(\tau,\cdot)  \|_{H^s}\right)\right)\leq C_2.
	\end{equation}
	\end{Lemme}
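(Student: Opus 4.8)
The plan is to obtain the uniform bound directly from the \emph{a priori} control on the solutions produced by the Picard scheme, combined with the Sobolev product law already used in the proof of Proposition \ref{Prop:LWP}. The starting observation is that the fixed-point solution $(u_{\alpha_1},\dots,u_{\alpha_n})$ constructed via Lemma \ref{Lemma_Picard} satisfies $\|(u_{\alpha_1},\dots,u_{\alpha_n})\|_E \leq 3\delta_{\ast}$, where, in the notation of that lemma, $\delta_{\ast} = C\sum_{j=1}^{n}\|u_{0,\alpha_j}\|_{H^s}$ is the bound on $\|e_0\|_E$. In particular, each component obeys $\|u_{\alpha_i}\|_{L^\infty_t H^s_x} \leq 3\delta_{\ast}$ on its interval of existence. (I write $\delta_\ast$ to avoid clashing with the neighborhood parameter $\delta$ of condition (\ref{Condition-alpha}).)

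Next I would make this bound \emph{uniform} in the fractional powers. Since we have already fixed $T < \min(1,T_0) \leq T_\alpha$ for every $\alpha_i \in (2-\delta,2+\delta)$, each solution is defined on the common interval $[0,T]$, and the $L^\infty_t H^s_x$ bound above holds there. The only remaining $\alpha$-dependence enters through the initial data; invoking the uniform control (\ref{Control-Data}), $\sup_{2-\delta<\alpha_j<2+\delta}\|u_{0,\alpha_j}\|_{H^s}\leq M_\delta$, gives $\delta_{\ast}\leq CnM_\delta$ independently of $\alpha$.

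Finally, the conclusion follows from the product law in $H^s(\R^d)$ for $s>d/2$, namely $\|u_{\alpha_j}u_{\alpha_k}\|_{H^s}\leq c_s\|u_{\alpha_j}\|_{H^s}\|u_{\alpha_k}\|_{H^s}$, exactly as in the estimate of the bilinear term $B_i$. Taking the supremum over $\tau\in[0,T]$ and combining with the previous step yields $\sup_{0\leq\tau\leq T}\|u_{\alpha_j}u_{\alpha_k}(\tau,\cdot)\|_{H^s}\leq c_s(3CnM_\delta)^2$, and setting $C_2:=c_s(3CnM_\delta)^2$ gives the claim. Since $M_\delta$ depends only on the limiting data $(u_{0,2,1},\dots,u_{0,2,n})$ and on $\delta$, while $c_s$ depends only on $s$ and $d$, the constant $C_2$ has the advertised dependence and is, crucially, independent of $\alpha_j,\alpha_k$ and of $\tau$.

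I expect the only real subtlety, rather than a genuine obstacle, to be the bookkeeping of uniformity: one must ensure that the $\alpha$-independent existence time $T_0$ is genuinely available (it is, from the preceding lemma) so that the single Banach space $E=\prod_i\mathcal{C}([0,T],H^s)$ can host all the solutions simultaneously, and that the Picard bound transfers from $[0,T_\alpha]$ to the smaller $[0,T]$ (it does, since restriction only decreases the $L^\infty_t$ norm). No new estimate beyond those already established in the proof of Proposition \ref{Prop:LWP} is required.
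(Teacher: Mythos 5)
Your proposal is correct and follows essentially the same route as the paper: a uniform-in-$\alpha$ bound on $\sup_{0\leq\tau\leq T}\|u_{\alpha_j}(\tau,\cdot)\|_{H^s}$ obtained from the Picard fixed-point bound on the common interval $[0,T]\subset[0,T_0]\subset[0,T_\alpha]$, made $\alpha$-independent via the uniform data control $M_\delta$ of (\ref{Control-Data}), and then the product law $\|u_{\alpha_j}u_{\alpha_k}\|_{H^s}\leq c_s\|u_{\alpha_j}\|_{H^s}\|u_{\alpha_k}\|_{H^s}$ for $s>d/2$. The only cosmetic difference is that you bound each component by the full $E$-norm $3\delta_\ast$ from Lemma \ref{Lemma_Picard}, whereas the paper quotes a per-component bound $c_j\|u_{0,\alpha_j}\|_{H^s}$; both yield a constant $C_2$ of the same form and dependence.
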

\begin{proof}
Recall 	that the solution $(u_{\alpha_1}, \cdots, u_{\alpha_n}) \subset \mathcal{C}_t H^s_x$ to the integral coupled system (\ref{System-Integral}) is obtained in Proposition \ref{Prop:LWP} using Picard's iterative schema.  Consequently, for each $j=1,\cdots,n$,  one has
\begin{equation*}
\sup_{0\leq \tau \leq T} \| u_{\alpha_j}(\tau,\cdot)\|_{H^s} \leq \sup_{0\leq \tau \leq T_{\alpha_j}} \| u_{\alpha_j}(\tau,\cdot)\|_{H^s} \leq  c_j \, \| u_{0,\alpha_j}\|_{H^s},
\end{equation*}
where $T<T_0\leq T_{\alpha_j}$, and with a constant  $c_j>0$.  Then,  by the control given in (\ref{Control-Data}) and denoting $\ds{c:=\max_{j=1,\cdots,n}c_j}$,  we obtain
\begin{equation}\label{Control-Sol-2}
\sup_{2-\delta<\alpha_j<2+\delta} \left( \sup_{0\leq \tau \leq T} \| u_{\alpha_j}(\tau,\cdot)\|_{H^s}\right) \leq  c_j \, \left(\sup_{2-\delta<\alpha_j<2+\delta}   \| u_{0,\alpha_j}\|_{H^s}  \right) \leq c\,  M_{\delta}.
\end{equation}
In this way, from product laws in Sobolev spaces $H^s(\R^d)$ with $s>d/2$, there exists a constant $c_s>0$ such that  we can write
\begin{equation*}
\begin{split}
&\, \sup_{2-\delta<\alpha_j,\alpha_k<2+\delta} \left(\sup_{0\leq \tau\leq T} \| u_{\alpha_j} u_{\alpha_k}(\tau,\cdot)\|_{H^s} \right) \\
\leq  &\,  \sup_{2-\delta<\alpha_j,\alpha_j<2+\delta} \left(\sup_{0\leq \tau\leq T} c_s \| u_{\alpha_j}(\tau,\cdot)\|_{H^s}\, \|u_{\alpha_k}(\tau,\cdot)\|_{H^s} \right)\\
\leq  &\, c_s \, \left(  \sup_{2-\delta<\alpha_j<2+\delta} \left(\sup_{0\leq \tau\leq T} \| u_{\alpha_j}(\tau,\cdot)\|_{H^s} \right)\right) \times \left( \sup_{2-\delta\alpha_k<2+\delta} \left( \sup_{0\leq \tau\leq T} \|u_{\alpha_k}(\tau,\cdot)\|_{H^s} \right)\right) \\
\leq &\, c_s (c\,M_\delta)^2 =:  C_2,
\end{split}
\end{equation*}
from which we obtain the desired estimate (\ref{Control-2}).
\end{proof}	

\medskip

Having established the controls given in (\ref{Control-1}) and (\ref{Control-2}),  we return to the previous estimate of the term $I_{2,1}$ and write
\begin{equation*}
\begin{split}
I_{2,1}\leq &\,\,  C_1\,  \sum_{j=1}^{n}\sum_{k=1}^{n} t^\eta \, \int_{0}^{t} \left\| |\xi|\left( \widehat{h}_{\alpha_i} - \widehat{h}_2\right)(t-\tau,\cdot) \right\|_{L^\infty}\, \left\| u_{\alpha_j} u_{\alpha_k}(\tau,\cdot)  \right\|_{H^s}\, d\tau\\
\leq & \,\,  C_1  C(1+T^{\kappa+1})|2-\alpha_i| \left(  t^\eta \int_{0}^{t} (t-\tau)^{-\kappa} d\tau\right)   \times  \left( \sum_{j=1}^{n}\sum_{k=1}^{n} \left( \sup_{0\leq \tau\leq T}\| u_{\alpha_j} u_{\alpha_k}(\tau,\cdot)\|_{H^s}\right)\right)\\
\leq &\,\, C_1  C(1+T^{\kappa+1})|2-\alpha_i|\,  (t^{\eta+1-\kappa}) \times ( n^2\, C_2)\\
\leq &\,\, n^2\, C_1\, C_2 \, C(1+T^{\kappa+1})\,T^{\eta+1-\kappa}\, |2-\alpha_i|\\
=:& \,\,  \mathfrak{C}_3 (1+T^{\kappa+1})\,T^{\eta+1-\kappa}\, |2-\alpha_i|,\\
\end{split}
\end{equation*}
where, since $\kappa<1$ it follows that  $\eta+1-\kappa>0$. Finally, since $\ds{|2-\alpha_i| \leq \max_{i=1,\cdots,n}F_i(|2-\alpha_i|)}$, we obtain
\begin{equation}\label{I21}
I_{2,1}\leq \mathfrak{C}_3 (1+T^{\kappa+1})\,T^{\eta+1-\kappa} \, \max_{i=1,\cdots,n}F_i(|2-\alpha_i|).
\end{equation} 

\medskip

To control the second term $I_{2,2}$, we use again the definition of the operator $Q_{i,j,k}$ given in (\ref{Term-Q}) and the estimate $\widehat{q}_{i,j,k}(\xi) \leq C_1|\xi|$. Moreover, we apply well-known properties of the classical heat kernel $h_2$.   As a result, we obtain
\begin{equation*}
\begin{split}
I_{2,2}\leq &\, \sum_{j=1}^{n}\sum_{k=1}^{n} t^\eta \int_{0}^{t} \left\| h_2(t-\tau,\cdot)\ast Q_{i,j,k}\Big (u_{\alpha_j}(u_{\alpha_k}-u_{2,k}) + (u_{\alpha_j}-u_{2,j})u_{2,k}\Big)(\tau,\cdot) \right\|_{H^s} d \tau \\
\leq &\,C_1\, \sum_{j=1}^{n}\sum_{k=1}^{n} t^\eta \int_{0}^{t} \left\| \,|\xi|\,  \widehat{h}_2(t-\tau,\cdot) \, \mathcal{F}\Big (u_{\alpha_j}(u_{\alpha_k}-u_{2,k}) + (u_{\alpha_j}-u_{2,j})u_{2,k}\Big)(\tau,\cdot) (1+|\xi|^2)^{\frac{s}{2}}  \right\|_{L^2}\\
\leq &\, C_1\, \sum_{j=1}^{n}\sum_{k=1}^{n} t^\eta \, \int_{0}^{t} \left\| \,|\xi|\,  \widehat{h}_2(t-\tau,\cdot) \right\|_{L^\infty}\, \left\| \Big (u_{\alpha_j}(u_{\alpha_k}-u_{2,k}) + (u_{\alpha_j}-u_{2,j})u_{2,k}\Big)(\tau,\cdot) \right\|_{H^s} d \tau\\
\leq &\, C_1\, \sum_{j=1}^{n}\sum_{k=1}^{n} t^\eta \, C\,  \int_{0}^{t}  (t-\tau)^{-1/2} \, \left\| \Big (u_{\alpha_j}(u_{\alpha_k}-u_{2,k}) + (u_{\alpha_j}-u_{2,j})u_{2,k}\Big)(\tau,\cdot) \right\|_{H^s} d \tau.
\end{split}
\end{equation*} 
To estimate the second expression in the last integral, using product laws in Sobolev spaces and the uniform control (\ref{Control-Sol-2}), we can write, for every $0 < \tau \leq T$:
\begin{equation*}
\begin{split}
&\, \left\| \Big (u_{\alpha_j}(u_{\alpha_k}-u_{2,k}) + (u_{\alpha_j}-u_{2,j})u_{2,k}\Big)(\tau,\cdot) \right\|_{H^s}\\
 \leq & c_s\,\| u_{\alpha_j}(\tau,\cdot)\|_{H^s}\| (u_{\alpha_k}-u_{2,k})(\tau,\cdot)\|_{H^s} + c_s\, \| (u_{\alpha_j}-u_{2,j})(\tau,\cdot)\|_{H^s}\| u_{2,k}(\tau,\cdot)\|_{H^s} \\
 \leq &\, c_s(c\, M_{\delta}) \, \| (u_{\alpha_k}-u_{2,k})(\tau,\cdot)\|_{H^s} +c_s(c\, M_\delta)   \| (u_{\alpha_j}-u_{2,j})(\tau,\cdot)\|_{H^s}.
\end{split}
\end{equation*}
We then write, for $0\leq t \leq T$:
\begin{equation*}
\begin{split}
I_{2,2}\leq &\, C\,C_1\,c_s( c\, M_\delta)   \sum_{j=1}^{n}\sum_{k=1}^{n} t^\eta \int_{0}^{t}(t-\tau)^{-1/2}\, \Big( \|(u_{\alpha_k}-u_{2,k})(\tau,\cdot)\|_{H^s} +   \| (u_{\alpha_j}-u_{2,j})(\tau,\cdot)\|_{H^s} \Big) d \tau \\
=&\, C\,C_1\,c_s( c\, M_\delta)   n\, \sum_{j=1}^{n} t^\eta \int_{0}^{t}(t-\tau)^{-1/2}\, \|(u_{\alpha_j}-u_{2,j})(\tau,\cdot)\|_{H^s}  d \tau\\
&\, +C\,C_1\,c_s( c\, M_\delta)   n\, \sum_{k=1}^{n} t^\eta \int_{0}^{t}(t-\tau)^{-1/2}\,  \|(u_{\alpha_k}-u_{2,k})(\tau,\cdot)\|_{H^s} d \tau\\
=&\, 2n\, C\,C_1\, c_s(c\, M_\delta)\, \sum_{i=1}^{n} t^\eta \int_{0}^{t} (t-\tau)^{-1/2}\|( u_{\alpha_i} - u_{2,i})(\tau,\cdot)\|_{H^s} d\tau.
\end{split}
\end{equation*}
Denoting $\mathfrak{C}_4:=2n\, C\,C_1\, c_s(c\, M_\delta)$, we obtain
\begin{equation}\label{I22}
\begin{split}
I_{2,2} \leq &\,  \mathfrak{C}_4 \, \sum_{i=1}^{n} t^\eta \int_{0}^{t} (t-\tau)^{-1/2}\|( u_{\alpha_i} - u_{2,i})(\tau,\cdot)\|_{H^s} d\tau\\ 
=&\, \mathfrak{C}_4\,  t^\eta \int_{0}^{t} (t-\tau)^{-1/2}\, \tau^{-\eta}\, \left( \sum_{i=1}^{n}\tau^\eta  \|(u_{\alpha_i}-u_{2,i})(\tau,\cdot)\|_{H^s}\right)\, d\tau \\
\leq &\, \mathfrak{C}_4\, \left(t^\eta \, \int_{0}^{t} (t-\tau)^{-1/2}\, \tau^{-\eta} d\tau \right)\left( \sup_{0\leq \tau\leq T} \sum_{i=1}^{n}\tau^\eta  \|(u_{\alpha_i}-u_{2,i})(\tau,\cdot)\|_{H^s} \right)  \\
\leq & \, \mathfrak{C}_4\, T^{1/2}\,  \left( \sup_{0\leq \tau\leq T} \sum_{i=1}^{n}\tau^\eta  \|(u_{\alpha_i}-u_{2,i})(\tau,\cdot)\|_{H^s} \right).
\end{split}
\end{equation} 
Having the estimates (\ref{I21}) and (\ref{I22}) at our disposal, we deduce from estimate (\ref{I2-Previous}) that
\begin{equation}\label{I_2}
I_2 \leq \mathfrak{C}_3 (1+T^{\kappa+1})\,T^{\eta+1-\kappa}\, \max_{i=1,\cdots,n} F_i(|2-\alpha_i|)+ \mathfrak{C}_4\, T^{1/2}\, \left( \sup_{0\leq t\leq T} \sum_{i=1}^{n} t^\eta  \|(u_{\alpha_i}-u_{2,i})(t,\cdot)\|_{H^s} \right).
\end{equation}

\emph{Term $I_3$}.  We write
\begin{equation}
\begin{split}
I_3 \leq &\, \sum_{j=1}^{n} t^\eta \left\| \int_{0}^{t} (h_{\alpha_j} - h_2)(t-\tau, \cdot) \ast L_{i,j}(u_{\alpha_j})(\tau, \cdot) , d\tau \right\|_{H^s} \\
&\,+ \sum_{j=1}^{n} t^\eta \left\| \int_{0}^{t} h_2(t-\tau, \cdot) \ast L_{i,j}(u_{\alpha_j} - u_{2,j})(\tau, \cdot) , d\tau \right\|_{H^s} \\
=: &\, I_{3,1} + I_{3,2},
\end{split}
\end{equation}
where we recall that the operator $L_{i,j}(\cdot)$ is defined in expression (\ref{Term-L}) by the symbol $\widehat{\ell_{i,j}}(\xi)$, which is bounded for any $\xi \neq 0$. Consequently, for a constant $c_{i,j} > 0$, one has $\|L_{i,j}(u)\|_{H^s} \leq c_{i,j}\|u\|_{H^s}$. We also denote $c := \ds{\max_{i,j=1, \ldots, n} c_{i,j}}$.  

\medskip

To control the term $I_{3,1}$, recall that by estimate (\ref{Key-Estimate-1}), we obtain
\[ \| (\widehat{h}_{\alpha_j} - \widehat{h}_2)(t-\tau,\cdot)\|_{L^\infty} \leq C(1+T^{\eta+1})\,| 2-\alpha_i |\, (t-\tau)^{-\eta}. \]
Additionally, from estimate (\ref{Control-Sol-2}) it follows that 
\[   \sum_{j=1}^{n} \sup_{0\leq \tau\leq T} \| u_{\alpha_j}(\tau,\cdot)\|_{H^s}  \leq n\, c M_\delta. \]
Then,  we obtain
\begin{equation*}
\begin{split}
I_{3,1}\leq &\, \sum_{j=1}^{n} t^\eta \int_{0}^{t}\| (\widehat{h}_{\alpha_j} - \widehat{h}_2)(t-\tau,\cdot)\|_{L^\infty}\, \| L_{i,j}(u_{\alpha_j})(\tau,\cdot)\|_{H^s} \, d \tau \\
\leq &\, c\,  \sum_{j=1}^{n} t^\eta \int_{0}^{t}\| (\widehat{h}_{\alpha_j} - \widehat{h}_2)(t-\tau,\cdot)\|_{L^\infty}\, \| u_{\alpha_j}(\tau,\cdot)\|_{H^s} \, d \tau \\
\leq &\, c\,C(1+T^{\eta+1})|2-\alpha_i|\, \left( t^\eta \int_{0}^{t}(t-\tau)^{-\eta} d \tau \right) \left( \sum_{j=1}^{n} \sup_{0\leq \tau\leq T} \| u_{\alpha_j}(\tau,\cdot)\|_{H^s}\right)  \\
\leq &\,c\,  C(1+T^{\eta+1})|2-\alpha_i|\, T\, ( n \, c M_\delta) =:   \mathfrak{C}_5(1+T^{\eta+1})T\, \max_{i=1,\cdots,n}F_{i}(|2-\alpha_i|).
\end{split}
\end{equation*}
Similarly, to estimate the term $I_{3,2}$, using again well-known properties of the heat kernel $h_2$, we write
\begin{equation*}
\begin{split}
I_{3,2} \leq &\, c\, \sum_{j=1}^{n} t^\eta \int_{0}^{t}\| \widehat{h}_2(t-\tau,\cdot)\|_{L^\infty}\, \| (u_{\alpha_j}-u_{2,j})(\tau,\cdot)\|_{H^s}\, d\tau \\
\leq &\, c\, C\, \left( t^\eta\,  \int_{0}^t d\tau  \right)\, \left( \sup_{0\leq \tau \leq T} \sum_{j=1}^{n} \| (u_{\alpha_j}-u_{2,j})(\tau,\cdot)\|_{H^s}\right) \\ 
\leq &\, \mathfrak{C}_6 \, T^{\eta+1} \, \left(  \sup_{0\leq \tau\leq T} \sum_{j=1}^{n} \tau^\eta \| (u_{\alpha_j}-u_{2,j})(\tau,\cdot)\|_{H^s} \right).
\end{split}
\end{equation*}
Consequently, the term $I_3$ verifies:
\begin{equation}\label{I_3}
I_3 \leq  \mathfrak{C}_5(1+T^{\eta+1})T\, \max_{i=1,\cdots,n}F_{i}(|2-\alpha_i|) + \mathfrak{C}_6 \, T^{\eta+1} \, \left(  \sup_{0\leq \tau\leq T} \sum_{i=1}^{n} \tau^\eta \| (u_{\alpha_i}-u_{2,i})(\tau,\cdot)\|_{H^s} \right).
\end{equation} 

\medskip

Having estimated the terms $I_1$, $I_2$ and $I_3$ in (\ref{I1}), (\ref{I_2}), and (\ref{I_3}), respectively, we now return to inequality (\ref{Ineq1}) to obtain:
\begin{equation*}
\begin{split}
&\,\, t^\eta\| u_{\alpha_i}(t,\cdot)-u_{2,i}(t,\cdot)\|_{H^s} \\
\leq &\,\,   \mathfrak{C}_{1}\left(1+T^{\eta+1}\right)\,
\max_{i=1,\cdots,n}F_i(|2-\alpha_i|) \\ 
&\,\, + \mathfrak{C}_3 (1+T^{\kappa+1})\,T^{\eta+1-\kappa}\, \max_{i=1,\cdots,n} F_i(|2-\alpha_i|)+ \mathfrak{C}_4\, T^{1/2}\, \left( \sup_{0\leq \tau\leq T} \sum_{i=1}^{n}\tau^\eta  \|(u_{\alpha_i}-u_{2,i})(\tau,\cdot)\|_{H^s} \right)\\
&\,\, \mathfrak{C}_5(1+T^{\eta+1})T\, \max_{i=1,\cdots,n}F_{i}(|2-\alpha_i|) + \mathfrak{C}_6 \, T^{\eta+1} \, \left(  \sup_{0\leq \tau\leq T} \sum_{i=1}^{n} \tau^\eta \| (u_{\alpha_i}-u_{2,i})(\tau,\cdot)\|_{H^s} \right).
\end{split} 
\end{equation*}
Then, rearranging the terms, we write:
\begin{equation*}
\begin{split}
&\,\, \left(\sup_{0\leq t \leq T} \sum_{i=1}^{n}  t^\eta\| u_{\alpha_i}(t,\cdot)-u_{2,i}(t,\cdot)\|_{H^s} \right)  \\
\leq &\,\,   n\times \max( \mathfrak{C}_{1}, \mathfrak{C}_3,  \mathfrak{C}_5) \Big((1+T^{\eta+1})+ (1+T^{\kappa+1})\,T^{\eta+1-\kappa}+ (1+T^{\eta+1})T\Big)\,
\max_{i=1,\cdots,n}F_i(|2-\alpha_i|) \\ 
&\,\, + n \times \max(\mathfrak{C}_4,\mathfrak{C}_6)\left( T^{1/2}+T^{\eta+1}\right) \left(  \sup_{0\leq \tau\leq T} \sum_{i=1}^{n} \tau^\eta \| (u_{\alpha_i}-u_{2,i})(\tau,\cdot)\|_{H^s} \right).
\end{split} 
\end{equation*}

In the first term on the right-hand side, to obtain a simpler expression involving the time $T$, recall that $T<\min(1,T_0)$ and  $\eta<\sigma$.  Hence:
\[ \Big((1+T^{\eta+1})+ (1+T^{\kappa+1})\,T^{\eta+1-\kappa}+ (1+T^{\eta+1})T\Big) \leq 3(1+T^{\eta+1}).\]
In addition, we define the constant  ${\bf C}:= 6\,n\times \max( \mathfrak{C}_{1},\mathfrak{C}_3,\mathfrak{C}_5)$.

\medskip

On the other hand, in the second term on the right-hand side, we set  $T$ sufficiently small such that: 
\[ n \times  \max(\mathfrak{C}_4,\mathfrak{C}_6)\left( T^{1/2}+T^{\eta+1}\right) \leq \frac{1}{2}. \]

With these estimates, it follows that: 
\begin{equation}\label{Main-Esim-T}
\left(\sup_{0\leq t \leq T} \sum_{i=1}^{n}  t^\eta\| u_{\alpha_i}(t,\cdot)-u_{2,i}(t,\cdot)\|_{H^s} \right)  \leq {\bf C} (1+T^{\eta+1})\,  \max_{i=1,\cdots,n}F_i(|2-\alpha_i|). 
\end{equation}

\subsubsection{Step 2}\label{Sec:Step-2}  For the reader's convenience, we reiterate that each component of the solution $(u_{\alpha_1}, \ldots, u_{\alpha_n})$ is defined over the interval of time $[0,T_{\alpha}]$, while the limiting solution $(u_{2,1},\cdots, u_{2,n})$ exists on  $[0,T_2]$. Moreover, by our assumption (\ref{Assumption-Times}), we have  $T_{\alpha}\leq T_2$. 

\medskip

To complete the proof of Theorem \ref{Th-Main}, we extend the estimate (\ref{Main-Esim-T}) to the interval of time $[0,T_2]$ as follows. First, each component $u_{\alpha_i}$ is extended by zero to the whole interval $[0,T_2]$. We then define $\tilde{u}_{\alpha_i}:[0,T_2]\times \R^d \to  \R$ as:
\begin{equation*}
\tilde{u}_{\alpha_i}(t,\cdot)=\begin{cases}\vspace{1mm}
u_{\alpha_i}(t,\cdot), & \quad 0\leq t \leq T_{\alpha}, \\
0, &\quad T_{\alpha}<t\leq T_2.
\end{cases}
\end{equation*}
For the time $T>0$ fixed above, the estimate (\ref{Main-Esim-T}) holds for the extended solution $(\tilde{u}_{\alpha_1}, \cdots, \tilde{u}_{\alpha_n})$. Finally, we iteratively apply this argument up to the time $T_2$, obtaining the desired estimate (\ref{Conv-Rate-Sol}). Theorem \ref{Th-Main} is now proven.

\section{Proof of Corollary \ref{Corollary}}
This corollary directly follows from estimate (\ref{Conv-Rate-Sol-Epsilon}) and the well-known interpolation and Gagliardo–Nirenberg inequalities. Indeed, in the case of the $L^p$-spaces, by the continuous embeddings $H^s(\R^d)\subset L^2(\R^d)$ and $H^s(\R^d)\subset L^\infty(\R^d)$ (since $s>d/2$), for $2\leq p \leq +\infty$ and $\theta=\frac{2}{p}\in [0,1]$, we can write 
\begin{equation*}
\begin{split}
\| u_{\alpha_i}(t,\cdot)-u_{2,i}(t,\cdot)\|_{L^p} \leq  &\, C \| u_{\alpha_i}(t,\cdot)-u_{2,i}(t,\cdot)\|^{\theta}_{L^2}\, \| u_{\alpha_i}(t,\cdot)-u_{2,i}(t,\cdot)\|^{1-\theta}_{L^\infty} \\
\leq &\, C \| u_{\alpha_i}(t,\cdot)-u_{2,i}(t,\cdot)\|_{H^s}.  
\end{split} 
\end{equation*} 
Similarly, in the case of the $\dot{W}^{\sigma,p}$-spaces, for $0<\sigma<s$ and $2\leq p<+\infty$ such that $\frac{1}{p}=\frac{\theta}{q}+\frac{1-\theta}{2}$, with $2\leq q \leq +\infty$ and  $\theta = 1-\frac{\sigma}{s}\in (0,1)$, we obtain
\begin{equation*}
\begin{split}
\| u_{\alpha_i}(t,\cdot)-u_{2,i}(t,\cdot)\|_{\dot{W}^{\sigma,p}} \leq &\, C \| u_{\alpha_i}(t,\cdot)-u_{2,i}(t,\cdot)\|^{\theta}_{L^{q}} \, \| u_{\alpha_i}(t,\cdot)-u_{2,i}(t,\cdot)\|^{1-\theta}_{\dot{H}^{s}}\\
\leq&\, C \| u_{\alpha_i}(t,\cdot)-u_{2,i}(t,\cdot)\|_{H^s}.
 \end{split}
\end{equation*}

%%%%%%%%%%%%%%%%%%%%%%%%%%%%%%%%%%%%%%%%%%%%%%

\end{document}